\documentclass[12pt, reqno]{amsart}

\usepackage{amssymb, amsmath, amsthm, mathrsfs, verbatim}
\usepackage[backref]{hyperref}
\usepackage{graphicx}
\usepackage{epstopdf,epsfig}
\usepackage[author-year,backrefs]{amsrefs}
\usepackage{amscd}   
\usepackage{fullpage}
\usepackage[all]{xy} 
\usepackage[section]{placeins}


\usepackage[usenames,dvipsnames]{color}

\newcommand{\change}[1]{{\color{blue} #1}}
\newcommand{\changeK}[1]{{\color{blue} #1}}


\newcommand{\Op}{{O}_p}
\newcommand{\cyclicp}{{C}_p}

\newcommand{\Cm}{{C}_m}



\newtheorem{lemma}{Lemma}[section]
\newtheorem{theorem}[lemma]{Theorem}

\newtheorem{prop}[lemma]{Proposition}

\newtheorem{claim*}{Claim}

\newtheorem{defn}[lemma]{Definition}

\newtheorem{proposition}[lemma]{Proposition}

\newtheorem{definition}[lemma]{Definition}

\newtheorem*{theorem*}{Theorem}

\theoremstyle{plain}

\theoremstyle{remark}
\newtheorem{rema}[lemma]{Remark}

\newtheorem{remark}[lemma]{Remark}

\newcommand{\Aff}{{\mathbb A}}

\newcommand{\PP}{{\mathbb P}}

\newcommand{\F}{{\mathbb F}}
\newcommand{\Q}{{\mathbb Q}}

\newcommand{\Z}{{\mathbb Z}}


\newcommand{\calC}{{\mathcal C}}

\newcommand{\calO}{{\mathcal O}}

\newcommand{\calS}{{\mathcal S}}

\newcommand{\calZ}{{\mathcal Z}}
\newcommand{\OO}{{\mathcal O}}


\newcommand{\pp}{{\mathfrak p}}
\newcommand{\mm}{{\mathfrak m}}

\newcommand{\prob}{{\operatorname{Prob}}}

\newcommand{\A}{{\mathbb A}}

\newcommand{\cO}{\mathcal{O}}

\newcommand{\scrO}{\mathscr{O}}
\newcommand{\scrP}{\mathscr{P}}
\newcommand{\scrX}{\mathscr{X}}
\newcommand{\scrV}{\mathscr{V}}

\newcommand{\scrI}{\mathscr{I}}

\newcommand{\cC}{{\mathcal C}}

\newcommand{\fm}{\mathfrak m}
\newcommand{\fp}{\mathfrak p}

\newcommand{\ffp}{{\mathbb F}_p}


\DeclareMathOperator{\lcm}{lcm}

\DeclareMathOperator{\Char}{char}

\DeclareMathOperator{\Spec}{Spec}


\numberwithin{equation}{section}
\numberwithin{table}{section}

\newcommand{\defi}[1]{\textsf{#1}} 

\hyphenation{arch-i-med-e-an}

\title{Evidence for the dynamical Brauer-Manin criterion}
\thanks{
E.A. was partially supported by AG Laboratory NRU-HSE, RF government grant, ag. 11.G34.31.0023. Her research was carried out within the NRU HSE
Academic Fund Program for 2013-2014, research grant No. 12-01-0107. 
P.K. was partially supported by grants from the G\"oran Gustafsson
Foundation and the Swedish Research Council. K.N. was partially supported
by NSF grant DMS-0901149.
B.V. was supported by ICERM, the Centre Interfacultaire Bernoulli, and NSF grant DMS-1002933.
J.F.V. was supported by the Simons Foundation (grant \#234591) and by the Centre Interfacultaire Bernoulli.}

\author{Ekaterina Amerik}
\address{Laboratory of Algebraic Geometry, 
National Research University Higher School of Economics, 
7 Vavilova Str., Moscow, Russia, 117312; and Laboratoire de Math., Universit\'e Paris-Sud,
Campus d'Orsay, B\^atiment 425, 91405 Orsay, France}
\email{Ekaterina.Amerik@math.u-psud.fr}
\urladdr{www.math.u-psud.fr/\~{}amerik}

\author{P\"ar Kurlberg}
\address{Department of Mathematics, KTH Royal Institute of Technology,
SE-100 44 Stockholm, Sweden}
\email{kurlberg@math.kth.se}
\urladdr{www.math.kth.se/\~{ }kurlberg}

\author{Khoa Nguyen}
\address{
Department of Mathematics,
University of California,
Berkeley, CA 94720 
}

\email{khoanguyen2511@gmail.com}
\urladdr{http://math.berkeley.edu/\~{}khoa}

\author{Adam Towsley}
\address{Department of Mathematics, CUNY Graduate Center, New York, NY, 10016, USA}
\email{atowsley@gc.cuny.edu}
\urladdr{https://sites.google.com/site/atowsley42/}

\author{Bianca Viray}
\address{Department of Mathematics, Box 1917, Brown University, Providence, RI
			02912, USA}
\email{bviray@math.brown.edu}
\urladdr{http://math.brown.edu/\~{}bviray}

\author{Jos\'e Felipe Voloch}
\address{Department of Mathematics, University of Texas, Austin, TX 78712, USA}
\email{voloch@math.utexas.edu}
\urladdr{http://www.ma.utexas.edu/users/voloch/}

\date{}


\begin{document}

	\begin{abstract}
		Let $\varphi\colon X \to X$ be a morphism of a variety over a number field $K$.  We consider local conditions and a ``Brauer-Manin'' condition, defined by Hsia and Silverman, for the orbit of a point $P\in X(K)$ to be disjoint from a subvariety $V\subseteq X$, i.e., for $\OO_{\varphi}(P)\cap V = \emptyset.$  We provide evidence that the dynamical Brauer-Manin condition is sufficient to explain the lack of points in the intersection $\OO_{\varphi}(P)\cap V$; this evidence stems from a probabilistic argument as well as unconditional results in the case of \'etale maps.
	\end{abstract} 

	\maketitle
\section{Introduction}\label{sec:intro}

	In recent work, Hsia and Silverman~\cite{hsiasilverman} ask a dynamical question in analogy with a question of Scharaschkin~\cite{scharaschkin}; the dynamical question is as follows.  Let $\varphi\colon X \to X$ be a self-morphism of a variety over a number field $K$, let $V\subseteq X$ be a subvariety, and let $P\in X(K)$.  Hsia and Silverman ask whether the closure of the intersection of the orbit 
	\[
\change{\OO_{\varphi}(P) := \{P, \varphi(P),\varphi(\varphi(P)),\ldots \}}
	\] 
	with the subvariety $V$ is equal to the intersection of $V(\A_K)$ with the closure of $\OO_{\varphi}(P)$ in the adelic topology, i.e. whether
	\[
\change{\calC(V(K)\cap \OO_{\varphi}(P)) = V(\A_K)\cap \calC(\OO_{\varphi}(P)),}
	\]
	where $\calC(-)$ denotes the closure in the adelic topology.
	
	The purpose of this paper is to give some justification to the
assertion that a closely related question has a positive answer. 
In particular, when $K={\mathbb Q}$, \change{(and a choice of
an integral model for $V$ is made)} we give evidence for the assertion that
	\begin{equation}\label{eq:DBM}
		V(K)\cap \OO_{\varphi}(P) = \emptyset \Rightarrow
		V(\Z/m\Z)\cap (\OO_{\varphi}(P)\bmod{m}) = \emptyset
	\end{equation}
	for some integer $m$, as long as the map $\varphi$ is sufficiently random.

	Our evidence is twofold: a probabilistic argument that holds under certain randomness assumptions, and unconditional results in the case that $\varphi$ is \'etale and $V$ is $\varphi^k$-invariant or $\varphi$-preperiodic.
	
	The probabilistic argument is given in Section~\ref{sec:prob_proof} and is based on a probabilistic argument of Poonen~\cite{poonen} for the original question of Scharaschkin. Precisely, we show the following:
	\begin{theorem}
	Assume that $\varphi$ is sufficiently random (see Section~\ref{subsec:Assumptions} for the precise statement) and that the lengths of the cyclic part of the orbit $\OO_{\varphi}(P)$ modulo $p$
are integers with at least the same probability of being smooth as that of a random integer of similar size. Then there exists a sequence of  \emph{squarefree} integers $m$ such that
	\[
		\prob( \Cm \cap V(\Z/m\Z) = \emptyset  ) = 1 - o(1)
	\]
	as $m \to \infty$.  Here $\Cm$ denotes the cyclic part of $\calO_{\varphi}(P)$ in $X(\Z/m)$.
	\end{theorem}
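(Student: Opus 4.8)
The plan is to follow the probabilistic argument of Poonen~\cite{poonen}: bound $\prob(\Cm\cap V(\Z/m)\neq\emptyset)$ by a first moment, and use the smoothness hypothesis only to keep $\Cm$ small. First I would unwind the structure of $\Cm$ for a \emph{squarefree} modulus $m=p_1\cdots p_k$. By the Chinese Remainder Theorem $X(\Z/m)=\prod_{i=1}^k X(\F_{p_i})$ and $V(\Z/m)=\prod_{i=1}^k V(\F_{p_i})$, and the reduction of $\OO_{\varphi}(P)$ modulo $m$ is the diagonal image of its reductions modulo the $p_i$. Hence $\Cm$ is a single cycle of length $L:=\lcm_{1\le i\le k}\ell_{p_i}$: writing its points as $Q_a=\varphi^a(Q_0)$ for $a\in\Z/L$ and some $Q_0$ in the cycle, the reduction $Q_a\bmod p_i$ is the point $a\bmod \ell_{p_i}$ steps along the cycle $C_{p_i}$ from $Q_0\bmod p_i$. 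In particular $Q_a$ lies on $V(\Z/m)$ exactly when all $k$ of its reductions lie on the corresponding $V(\F_{p_i})$.

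Next I would invoke the randomness assumptions of Section~\ref{subsec:Assumptions} in the following guise: when the reductions $\varphi\bmod p_i$ are modelled as independent random self-maps, the events ``a given point of $C_{p_i}$ lies on $V(\F_{p_i})$'' are independent over $i$, and each has probability $(1+o(1))|V(\F_{p_i})|/|X(\F_{p_i})|$ because $C_{p_i}$ is close to equidistributed in $X(\F_{p_i})$. Summing over the $L$ points of $\Cm$ and applying Markov's inequality,
\[
  \prob\bigl(\Cm\cap V(\Z/m)\neq\emptyset\bigr)\ \le\ \EE\bigl[\#\bigl(\Cm\cap V(\Z/m)\bigr)\bigr]\ =\ (1+o(1))\,L\prod_{i=1}^k\frac{|V(\F_{p_i})|}{|X(\F_{p_i})|}.
\]
Since $\dim V<\dim X$, the Lang--Weil estimates give $|V(\F_{p})|/|X(\F_{p})|\le C\,p^{-1}$ with $C$ depending only on $X$ and $V$, so the right-hand side is $O\bigl(C^{k}L/m\bigr)$. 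It thus suffices to exhibit a sequence of squarefree $m$ along which $C^{\,\omega(m)}L=o(m)$, where $\omega(m)$ is the number of prime factors of $m$.

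This is where the smoothness hypothesis enters. Fix a constant $y$ with $\pi(y)\ge 2$ and an integer $k>\dim X\cdot\pi(y)$. By hypothesis $\ell_p$ is $y$-smooth with at least the frequency of a random integer of comparable size, so for all large $N$ there are more than $k$ primes $p\in(N,2N]$ with $\ell_p$ being $y$-smooth; take $k$ of them to be $p_1,\dots,p_k$ and set $m=m_N:=p_1\cdots p_k$, which is squarefree and tends to infinity. Each $\ell_{p_i}$ is $y$-smooth and bounded by $|X(\F_{p_i})|=O(N^{\dim X})$, so
\[
  L=\lcm_i\ell_{p_i}=\prod_{q\le y}q^{\,\max_i\ord_q(\ell_{p_i})}\ \le\ \Bigl(\max_i\ell_{p_i}\Bigr)^{\pi(y)}=O\bigl(N^{\dim X\cdot\pi(y)}\bigr),
\]
while $m\ge N^{k}$; as $k>\dim X\cdot\pi(y)$ we get $C^{k}L/m=O\bigl(N^{\dim X\cdot\pi(y)-k}\bigr)\to 0$, hence $\prob(\Cm\cap V(\Z/m)=\emptyset)=1-o(1)$ along $m=m_N$.

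The main obstacle is the second step: extracting from ``sufficiently random'' precisely the independence across the $p_i$ and the near-equidistribution of each cycle $C_{p_i}$ that the first-moment identity needs, and verifying that the resulting $o(1)$ corrections (together with the Lang--Weil constant $C^k$) are genuinely absorbed by the gain $N^{\dim X\cdot\pi(y)-k}$. A secondary technical point is that the abundance of primes $p\in(N,2N]$ with $y$-smooth $\ell_p$ used above requires $\ell_p$ to be polynomially bounded in $p$ with bounded exponent; this is automatic when $X$ is a curve, since then $\ell_p\le|X(\F_p)|=p+O(\sqrt p)$, but for higher-dimensional $X$ one should instead let $y=y(N)\to\infty$ slowly and enlarge $k$ accordingly.
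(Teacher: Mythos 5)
Your overall mechanism is the right one, and it matches the paper's: for squarefree $m$ one has $|C_m|=\lcm_{p\mid m}|C_p|$, and the disjointness probability is governed by $|C_m|/m^{d_1-d_2}$, so the whole game is to choose the primes dividing $m$ so that the lcm of the cycle lengths stays tiny compared to $m$. Your first-moment bound $\prob(\Cm\cap V(\Z/m)\neq\emptyset)\le L\prod_i |V(\F_{p_i})|/|X(\F_{p_i})|$ is a legitimate (even slightly cleaner) way to run the independence heuristic, consistent with assumptions~\eqref{eq:A2}--\eqref{eq:A3}.

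The genuine gap is in the prime-selection step. You take a \emph{fixed} smoothness bound $y$ and claim that, for all large $N$, the interval $(N,2N]$ contains more than $k$ primes with $|C_p|$ being $y$-smooth, citing the hypothesis that $|C_p|$ is at least as likely to be smooth as a random integer of its size. But the hypothesis, as made precise in the cycle length smoothness assumption~\eqref{eq:A4}, only quantifies $x^{\alpha}$-smoothness for a fixed power $\alpha\in(0,1)$ (where the Dickman density $\rho(d_1/2\alpha)$ is a positive constant); it says nothing about smoothness at a fixed threshold $y$. Worse, the random-integer model itself predicts the opposite of what you need once $d_1\ge 3$: a random integer of size $p^{d_1/2}$ is $y$-smooth with probability roughly $(\log p)^{\pi(y)}/p^{d_1/2}$, which is summable over $p$, so only finitely many primes with $y$-smooth cycle length are expected at all, and the count in a dyadic window tends to $0$. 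Your proposed fix of letting $y=y(N)\to\infty$ slowly changes the argument substantially (both $k$ and the exponent $\pi(y)$ in your bound on $L$ then grow), and in any case still requires smoothness information at sub-polynomial thresholds that assumption~\eqref{eq:A4} does not supply. The paper avoids this entirely by taking $m=m_{x,1/3}=\prod p$ over \emph{all} primes $p\le x$ with $|C_p|$ being $x^{1/3}$-smooth: assumption~\eqref{eq:A4} gives $\log m\asymp x$, while the lcm bound gives $|C_m|\le \exp(O(x^{1/3}))=m^{o(1)}$, whence $|C_m|/m^{d_1-d_2}\to 0$. To repair your write-up, replace the fixed-$y$ dyadic selection by this positive-density selection at threshold $x^{\alpha}$ with $\alpha$ small enough that $\pi(x^{\alpha})\log x=o(x)$ (e.g.\ $\alpha=1/3$), and your first-moment estimate then closes the argument.
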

	\change{
	\begin{remark}
          If the intersection of the orbit $\OO_{\varphi}(P)$ modulo
          $m$ with $V(\Z/m\Z)$ is contained only in the tail of
          $\OO_{\varphi}(P)\bmod m$, then the intersection
          $(\OO_{\varphi}(P) \bmod{m})\cap V(\Z/m\Z)$ consists of
          finitely many \emph{iterates} of $P$ under $\varphi$.  More specifically, $\varphi^n(P)\bmod m\notin V(\Z/m\Z)$ for all $n > N_0$,
          where $\varphi^n$ denotes  the composition of
            $\varphi$ with itself $n$ times.
          Therefore $V(K)\cap \OO_{\varphi}(P)$ is contained in 
		  \[
		  \{P, \varphi(P), \varphi^2(P), \ldots, \varphi^{N_0}(P)\},
		  \]
		  and so can be computed by a finite computation.
          Therefore, it is reasonable to restrict to the cyclic part
          of the orbit in the above theorem.
	\end{remark}}
In Section~\ref{sec:computations}, we provide numerical evidence for the randomness assumptions needed in the heuristic argument from Section~\ref{sec:prob_proof}.  We also describe experiments on randomly generated morphisms of $\Aff^5$ which support the argument that~\eqref{eq:DBM} holds.

	The unconditional results are the focus of Section~\ref{sec:DynamicalHasse}.  Assume that $X$ is quasi-projective, that $\varphi$ is \'etale, and that $\varphi^{k}(V)\subseteq V$, \change{i.e., that $V$ is \emph{$\varphi^{k}$-invariant}}, for some positive integer $k$. Under these assumptions we show that if $\OO_{\varphi}(P)\cap V = \emptyset$, then for all but finitely many primes $p$, there exists an $n = n(p)$ such that $\calO_{\varphi}(P)\bmod{p^n} \cap V(\Z/p^n\Z) = \emptyset.$  If every irreducible component of $V$ is \change{\emph{preperiodic}, i.e., $\varphi^{k_0 + k}(V) = \varphi^{k_0}(V)$ for some integers $k_0,k$ with $k>0$,} $X$ is \changeK{quasi-projective}, and $\varphi$ is \'etale and closed, then we obtain the same result.  More generally, we prove
	\begin{theorem}
			Let $X$ be a quasi-projective variety over a global field $K$. Assume that $\varphi$ is \'etale, and either \change{(1) that $V$ is $\varphi^m$-invariant or (2) that $\varphi$ is closed and every irreducible component of $V$ is $\varphi$-preperiodic.}  If $P\in X(K)$ is such that $V(K)\cap \calO_{\varphi}(P)= \emptyset$ then, for all but finitely many primes $v$,
			\[
				V(K_v)\cap \cC_v(\calO_{\varphi}(P)) = \emptyset,
			\]
			where $\cC_v(-)$ denotes closure in the $v$-adic topology.
	\end{theorem}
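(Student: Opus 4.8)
The plan is to reduce the statement to a statement about finitely many ``bad'' primes by a spreading-out argument, and then handle the remaining primes by an analysis of the reduction of the orbit. Let me first dispose of the reduction between the two cases. If every irreducible component $V_i$ of $V$ is $\varphi$-preperiodic and $\varphi$ is closed, then each $\varphi^{k_0}(V_i)$ is $\varphi^k$-invariant for suitable $k_0, k$; since $\varphi$ is closed, $\varphi^{k_0}(V)$ is again a closed subvariety, and $\OO_\varphi(P) \cap V = \emptyset$ forces $\OO_{\varphi}(\varphi^{k_0}(P)) \cap \varphi^{k_0}(V) = \emptyset$ (here is where closedness and the étale hypothesis give us control: the image of the orbit is the tail of the orbit of $\varphi^{k_0}(P)$, and only finitely many initial terms are lost, which we absorb into the finite set of excluded primes or check directly). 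So it suffices to treat case (1): $V$ is $\varphi^m$-invariant.

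Next I would set up the geometry over a ring of $S$-integers. Choose a finite set $S$ of places of $K$ (containing the archimedean ones) large enough that $X$, $V$, $\varphi$, and the point $P$ all extend to a model $\calX$ over $\OO_{K,S}$ with $\calX \to \Spec \OO_{K,S}$ of finite type, $\calV \subseteq \calX$ closed, $\varphi\colon \calX \to \calX$ finite étale (possible since étaleness is an open condition on the base and ``finite type + étale + proper-enough'' spreads out; if $\varphi$ is merely étale we keep it étale and quasi-projective, which is all we need), and $V^m$-invariance $\varphi^m(\calV) \subseteq \calV$ holds integrally. Enlarging $S$ further, we may also assume $\calX$ is separated and that the orbit points $\varphi^n(P)$ all give $\OO_{K,S}$-points of $\calX$. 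The claim will be that for every place $v \notin S$, the $v$-adic closure of the orbit misses $V(K_v)$.

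Now fix $v \notin S$ with residue field $k_v$, and suppose toward a contradiction that there is a point $Q \in V(K_v) \cap \cC_v(\OO_\varphi(P))$. Then there is a sequence $\varphi^{n_j}(P) \to Q$ in $X(K_v)$; by the valuative criterion / properness of reduction for $\OO_{K,S}$-points into the finite-type model, $Q$ extends to an $\OO_v$-point of $\calX$, and the reductions $\overline{\varphi^{n_j}(P)} \in \calX(k_v)$ eventually coincide with $\overline Q \in \calV(k_v)$, since $V$ is closed so its reduction is closed and congruence mod higher powers of $v$ eventually pins down the $k_v$-point. In particular the reduced orbit $\overline{\OO_\varphi(P)} \subseteq \calX(k_v)$ meets $\calV(k_v)$: there is some $N$ with $\overline{\varphi^N(P)} \in \calV(k_v)$. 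Because $k_v$ is finite, the reduced orbit is eventually periodic, so we may take $N$ lying in the periodic part, with period $\ell$ dividing some fixed integer (bounded in terms of $|\calX(k_v)|$ if one wants uniformity, though we only need existence). Here is the crux: I now want to lift this mod-$v$ coincidence back to characteristic zero using étaleness. Since $\varphi$ is étale, the iterate $\varphi^{N}$ restricted to a suitable formal/étale neighborhood is a local isomorphism, and the orbit point $\varphi^N(P)$ and the periodicity let me run Hensel's lemma: the point $\overline{\varphi^N(P)} \in \calV(k_v)$ lifts uniquely along the étale map to a $v$-adic point, and by replacing $N$ by a value in a residue class mod the orbit period we can force this lift to be an actual orbit point of $P$ in $X(K_v)$ — more precisely, using that $\varphi^{m}$ fixes $V$ and $\varphi$ is étale, one shows the full $v$-adic orbit closure that meets $V(K_v)$ must already meet $V(K)$, contradicting the hypothesis $V(K) \cap \OO_\varphi(P) = \emptyset$.

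The main obstacle, and the step I expect to require the most care, is exactly this last lifting step: passing from the combinatorial fact ``the reduced orbit hits the reduced $V$ at a periodic point'' to the arithmetic conclusion ``the $K$-orbit hits $V(K)$.'' The étale hypothesis is doing the heavy lifting — it guarantees that the dynamical system $\varphi$ on the $v$-adic points near the periodic cycle is conjugate (via the formal structure) to its reduction, so that a periodic point mod $v$ deforms to a $v$-adically periodic (hence, after identifying it inside the orbit and using $\varphi^m$-invariance of $V$, a $K$-rational) point on $V$. One has to be careful that the deformation stays on $V$: this is where $\varphi^m(\calV) \subseteq \calV$ combined with étaleness of $\varphi|_{\text{neighborhood}}$ is used, to see that the étale-local inverse of $\varphi^m$ preserves $\calV$ and hence the lifted periodic point lies on $\calV(\OO_v)$, and then a specialization/Zariski-density argument over the (finite-degree) orbit closure produces the $K$-point. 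The case analysis between finite-étale and merely-étale-and-quasi-projective will affect only whether one argues via properness of reduction or via an explicit valuative-criterion estimate on how deep in the $v$-adic topology one must go; neither changes the structure of the argument.
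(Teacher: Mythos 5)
Your argument has two genuine gaps, one in each half. First, the reduction of case (2) to case (1) does not work as stated: from $V(K)\cap\calO_{\varphi}(P)=\emptyset$ you cannot conclude $\varphi^{k_0}(V)(K)\cap\calO_{\varphi}(\varphi^{k_0}(P))=\emptyset$, because the orbit may perfectly well meet the periodic iterate $\varphi^{k_0}(V)$ while missing $V$ itself; nothing is ``lost only in finitely many initial terms.'' This is exactly the hard case: if some $\varphi^m(P)$ lands in the periodic image $V_1=\varphi^{k_0}(V)$, the whole tail of the orbit is trapped in $V_1$, and one must then control the intersection with $V$ via $V\cap V_1$. The paper handles this by induction on $\dim V$, using the nontrivial fact (Lemma~\ref{lem:YcapY_1}, proved with normalizations and semicontinuity, and using closedness of $\varphi$) that every irreducible component of $V\cap V_1$ is again preperiodic; your proposal has no substitute for this step.

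Second, and more seriously, the central lifting step in case (1) is not a proof. Knowing that reductions of orbit points lie on $\scrV(k_v)$ at a periodic point of the reduced dynamics is cheap information --- such ``accidental'' intersections mod $v$ occur for essentially every prime (this is the whole point of Section~\ref{sec:prob_proof}) and imply nothing about $\calO_{\varphi}(P)$ meeting $V$. Hensel's lemma lifts the smooth $k_v$-point of $\scrV$ to an $\calO_v$-point of $\scrV$, but that lift has no reason to lie in the orbit or its $v$-adic closure, and ``a periodic point mod $v$ deforms to a $v$-adically periodic point on $V$, hence a $K$-point'' is asserted, not argued; indeed the implication ``closure meets $V(K_v)$ $\Rightarrow$ orbit meets $V(K)$'' is precisely the theorem. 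What is needed is a mechanism controlling how $v$-adically close orbit points can get to $V$: the paper's Theorem~\ref{thm:adic_closure} gets this from \'etaleness via the isomorphism $\widehat{\varphi^{*}}$ of the completed local ring and the radical-ideal Lemma~\ref{lem:map_I_to_I}, yielding $\widehat{\varphi^{*}}(\scrI_{\bar P})\hat{\calO}_{\scrX,\bar P}=\scrI_{\bar P}\hat{\calO}_{\scrX,\bar P}$ and hence the norm inequality $\|H(f(u))\|\geq\|H(u)\|$, i.e.\ the ``distance to $\scrV$'' never decreases along the orbit; the alternative proof uses $p$-adic analytic uniformization of the orbit plus the fact that a nonzero analytic function on $\Z_p$ has finitely many zeros. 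Your sketch contains neither mechanism, so the contradiction you aim for is never actually produced. (A smaller omission: the local analysis needs $\scrX$ smooth over $\calO_v$ to identify $\hat{\calO}_{\scrX,\bar P}$ with a power series ring, so for merely quasi-projective, possibly singular $X$ one also needs the Noetherian induction on the smooth locus, which \'etaleness of $\varphi$ preserves.)
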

	In~\ref{subsec:closing}, we discuss whether the assumption \change{that $V$ is preperiodic or $\varphi^k$-invariant} can be weakened in any way.

	\section*{Acknowledgements}
		This project was started during the workshop on Global Arithmetic Dynamics, part of the semester program on Complex and Arithmetic Dynamics at the Institute for Computational and Experimental Research Mathematics (ICERM).  We thank Zo\'e Chatzidakis, Alice Medvedev, and Thomas Scanlon for helpful discussions during the workshop, and we are grateful to the ICERM staff and the organizers of the workshop, Xander Faber, Michelle Manes, Lucien Szpiro, Thomas Tucker, and Michael Zieve, for their support.  We thank Serge Cantat for pointing out a construction in Section~\ref{subsec:closing}.

\section{Probabilistic Proof}\label{sec:prob_proof}

	Let $X$ be a projective variety over $\Q$, let $V\subset X$ be a \change{closed} $\Q$-subvariety, and let $\varphi\colon X\to X$ be a $\Q$-morphism.  We write
	\[
		d_1 := \dim(X), \quad\textup{ and }\quad d_2 := \dim(V).
	\]

	The goal of this section is to prove, under some randomness
        assumptions, that if $\calO_{\varphi}(P) \cap V = \emptyset$,
        then with probability $1$ there exists a positive integer $m$
        such that\change{, roughly speaking, the cyclic part of
          $\OO_\varphi(P)\bmod{m}$ is disjoint from $V(\Z/m\Z)$}.
\change{To make this precise, we need some further notation. 
  Fix a finite set of primes $S$ such that $X, V$ and $P$ extend to
  flat projective models $\scrX, \scrV$ and $\scrP$, respectively,
  over $\Z_S$.  After possibly enlarging $S$, we also assume that
  $\varphi$ extends to a morphism $\tilde\varphi\colon\scrX\to\scrX$.
  Then for any integer $m$ which is relatively prime to all elements
  of $S$, we may consider the base change $X_m := \scrX\times_{\Z_S}
  \Z/m\Z$. Similarly, we have $P_m := \scrP\times_{\Z_S} \Z/m\Z$, $V_m
  := \scrV\times_{\Z_S} \Z/m\Z$ and $\varphi_m := \tilde\varphi|_{X_m} \colon
  X_m \to X_m$.  We will often abuse notation and write $V(\Z/m)$
  for $V_m(\Z/m)$.
		
		We write
		\[
			O_m := \{ P_m, \varphi_m(P_m), \varphi_m(\varphi_m(P_m)), \ldots \}.
		\]
		As $O_m$ is contained in the finite set $X_m(\Z/m)$,
                there is some pair of non-negative integers $k_0 <
                k_1$ such that $\varphi_m^{k_0}(P_m) =
                \varphi_m^{k_1}(P_m)$.  Let $k_0$ be the minimal such
                integer; then we define the cyclic part of $O_m$ as
		\[
			C_m := \{\varphi_m^{k_0}(P_m), \varphi_m^{k_0 + 1}(P_m), \varphi_m^{k_0 + 2}(P_m), \ldots \}.
		\] 
		

		}	

	\subsection{Assumptions on $\varphi$ and $V$}\label{subsec:Assumptions}
		With the above notation, our randomness assumptions on
                $\varphi$ and $V$ are then 
                based on the following heuristics:
		\begin{enumerate}
			\item The reduction of morphisms $\varphi\colon X\to X$ modulo $p$ behave like random maps on a finite set $S = X(\F_p)$.
			\item For any $y \in X(\F_p)$, the condition that $y \in O_p$ is independent from the condition that $y\in V(\F_p)$.
			\item \change{For any $y\in X(\Q)\setminus V(\Q)$}, the condition that $\bar{y}\in V(\F_p)$ is independent from the condition that \change{$\bar{y}\in {V}(\F_q)$} for primes $p\neq q$.
		\end{enumerate}
		These heuristics are made precise as follows.  
		\begin{enumerate}
			\item \defi{Orbit size assumption:} For all primes $p\notin S$,
				\begin{equation}\label{eq:A1}
					|\Op| = p^{d_{1}/2+o(1)}, \quad
					\textup{ and } \quad |\cyclicp| = p^{d_{1}/2+o(1)}.
				\end{equation}
			\item \defi{Independence assumption on $\calO_{\varphi}(P)$ and 
				$V$  modulo $p$}: For all primes $p\notin S$
				\begin{equation}
					\begin{array}{rl}
					\phantom{ and }
						\prob( x_{1},\ldots, x_{k} \not \in V(\ffp)\ | 
							\ x_{1},\ldots, x_{k} \in \Op  ) & 
							= (1-1/p^{d_{1}-d_{2}+o(1)} )^{k}, \textup{ and }\\
						\prob( x_{1},\ldots, x_{k} \not \in V(\ffp)\ | 
							\ x_{1},\ldots, x_{k} \in \cyclicp  )  &
							= (1-1/p^{d_{1}-d_{2}+o(1)} )^{k},
					\end{array}
					\label{eq:A2}
				\end{equation}
				for $k = O(p^{d_{1}/2+o(1)})$, and $x_{1}, \ldots, x_{k}$ distinct points modulo $p$.	
			\item \defi{Asymptotic independence modulo (large) primes:} if $\OO_{\varphi}(P)\cap V(\Q) = \emptyset$, then
				\begin{equation}\label{eq:A3}
					\begin{array}{rl}
					\phantom{ and }
						\prob( \Op \cap V(\ffp) \neq \emptyset \ \forall p>T  ) &
							= (1+o(1)) \cdot\prod_{p>T}
							\prob(\Op\cap V(\ffp)\neq \emptyset  ),\\
						\prob( \cyclicp \cap V(\ffp) \neq \emptyset \ \forall p>T)&
							= (1+o(1)) \cdot \prod_{p>T}
							\prob( \cyclicp \cap V(\ffp) \neq \emptyset  ).
  					\end{array}
				\end{equation}
		\end{enumerate}
		\change{\begin{remark}
			Any two different models will differ at only finitely many primes.  Therefore, since assumptions~\eqref{eq:A1}--\eqref{eq:A3} say something about behavior at sufficiently large primes, these assumptions are really assumptions on $\phi,X,V$ and $P$ and not $\tilde\phi,\scrX,\scrV,\scrP$.  
		\end{remark}
		}
		 If $A$ is a (large) finite set, and $f : A \to A$ is a sufficiently random map, then cardinality of the forward orbit of a random starting point is likely to be of size $|A|^{1/2+o(1)}$~\cite{FO-RandomMaps}.  This, together with the Weil conjectures which give that $|X(\ffp)|= p^{d_{1}+o(1)}$ and $|V(\ffp)|
  =p^{d_{2}+o(1)}$, shows that the above heuristics imply the precise assumptions~\eqref{eq:A1}--\eqref{eq:A3}. 
		\begin{remark}\label{rem:AssumptionFailures}
			We warn the reader that there are maps of special type for which the random map heuristic does not apply.  For example, for linear automorphisms on $\PP^n$, orbits can be as large as $p^{d_{1}+o(1)}$. 		
		\end{remark}		

	\subsection{Nonempty intersections modulo $p$}

		Under assumptions~\eqref{eq:A1}--\eqref{eq:A3} and if $V$ has sufficiently small codimension, then, with probability $1$, we will have $\calO_{\varphi}(P)\cap V(\ffp) \neq \emptyset$ for all but finitely many $p$ \emph{even if} $ \calO_{\varphi}(P)$ and $V$ have empty intersection. The intuition is as follows. If $V$ is, say, of codimension $1$ in $X$, then the probability that the reduction of $\varphi^n(P)$ modulo $p$ is contained in $V(\F_p)$ should equal $1/p$. Furthermore, if the orbit is of length $p^{d_{1}/2}$, as we would expect, then the likelihood of $V(\ffp) \cap \Op = \emptyset$ should be given by $(1-1/p)^{|\Op|} = \exp( - |\Op|/p)$, assuming $p$ is sufficiently large.  Thus, if the orbits are long, there will be ``accidental'' intersections modulo $p$.

		\begin{proposition} 
			Assume that assumptions~\eqref{eq:A1}--\eqref{eq:A3} hold, and that $d_{2} > d_{1}/2$.  Then, as $T \to \infty$, 
			\[
				\prob( \Op \cap V(\ffp) \neq \emptyset  \ \forall p>T  ) =
				1-e^{-T^{d_{2}-d_{1}/2+o(1)}}.
			\]
		\end{proposition}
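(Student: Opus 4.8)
The plan is to handle one prime at a time using~\eqref{eq:A1} and~\eqref{eq:A2}, then multiply over all $p>T$ and feed the result into~\eqref{eq:A3}. Throughout, set $\delta := d_2 - d_1/2$, which is positive by hypothesis, and take $T$ large enough that every prime $p>T$ lies outside $S$. (The interesting case, in which~\eqref{eq:A3} applies, is $\calO_\varphi(P)\cap V(\Q)=\emptyset$; I work in that case.)

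\emph{Step 1 (a single prime).} First I would show $\prob\bigl(\Op\cap V(\F_p)=\emptyset\bigr) = e^{-p^{\delta+o(1)}}$. Condition on the value $N := |\Op|$; by~\eqref{eq:A1} we have $N = p^{d_1/2+o(1)}$, which lies in the range $k = O(p^{d_1/2+o(1)})$ for which~\eqref{eq:A2} is asserted. Since the points of $\Op$ are pairwise distinct modulo $p$, the event $\Op\cap V(\F_p)=\emptyset$ is exactly that all $N$ of them miss $V(\F_p)$, so~\eqref{eq:A2} with $k=N$ gives
\[
  \prob\bigl(\Op\cap V(\F_p)=\emptyset\bigr) = \bigl(1 - p^{-(d_1-d_2)+o(1)}\bigr)^{N}.
\]
As $d_1-d_2>0$, the quantity $x := p^{-(d_1-d_2)+o(1)}$ tends to $0$, so $\log(1-x) = -x(1+O(x))$ and the right-hand side is $\exp\bigl(-Nx(1+o(1))\bigr)$; since $Nx = p^{d_1/2+o(1)}\cdot p^{-(d_1-d_2)+o(1)} = p^{\delta+o(1)}$ and the factor $(1+o(1))$ is absorbed into the exponent, this is $e^{-p^{\delta+o(1)}}$. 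Hence $\prob(\Op\cap V(\F_p)\neq\emptyset) = 1 - e^{-p^{\delta+o(1)}}$; the same computation with $\cyclicp$ in place of $\Op$ handles the cyclic part.

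\emph{Step 2 (the infinite product).} Next I would evaluate $P(T) := \prod_{p>T}\bigl(1 - e^{-p^{\delta+o(1)}}\bigr) = \prod_{p>T}(1-a_p)$ with $a_p := e^{-p^{\delta+o(1)}}$. Writing $p_1$ for the least prime exceeding $T$, one has the elementary pinch $a_{p_1} \le 1-P(T) \le \sum_{p>T}a_p$ (the left inequality since $P(T)\le 1-a_{p_1}$, the right one the standard bound $\prod(1-a_p)\ge 1-\sum a_p$). For the lower bound, $p_1 = T(1+o(1))$ and the exponent attached to $p_1$ is $\delta+o(1)$, so $a_{p_1} = e^{-T^{\delta+o(1)}}$. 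For the upper bound, put $\eta(T) := \sup_{p>T}\bigl|(\text{exponent of }p)-\delta\bigr|\to 0$, so $a_p\le e^{-T^{\delta-\eta(T)}}$ for every $p>T$; splitting $\sum_{p>T}a_p$ into $T<p\le 2T$ (at most $2T$ terms, each $\le e^{-T^{\delta-\eta(T)}}$, so contributing $\le 2T\,e^{-T^{\delta-\eta(T)}} = e^{-T^{\delta+o(1)}}$) and $p>2T$ (bounded by $\int_{2T}^{\infty}e^{-t^{\delta-\eta(T)}}\,dt$, which up to a polynomial prefactor is $e^{-(2T)^{\delta+o(1)}}$) shows $\sum_{p>T}a_p = e^{-T^{\delta+o(1)}}$. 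Thus $1-P(T)$ is squeezed between two quantities of the shape $e^{-T^{\delta+o(1)}}$, whence $P(T) = 1 - e^{-T^{\delta+o(1)}}$.

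\emph{Step 3 (assembly) and the main obstacle.} Finally, since $\calO_\varphi(P)\cap V(\Q)=\emptyset$, assumption~\eqref{eq:A3} gives $\prob(\Op\cap V(\F_p)\neq\emptyset\ \forall p>T) = (1+o(1))\,P(T)$, and because $P(T)\to 1$ this equals $1 - e^{-T^{d_2-d_1/2+o(1)}}$, as claimed; the argument for $\cyclicp$ is identical. I expect the delicate point to be Step 2: the identity rests on the sum $\sum_{p>T}e^{-p^{\delta+o(1)}}$ being controlled, up to sub-exponential factors, by its single largest term at $p\approx T$, so that it is the first prime past $T$ — and hence the quantity $T^{\delta+o(1)}$, rather than $(2T)^{\delta}$ or a stray polynomial factor — that governs the answer. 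This forces one to thread the trivial prime-counting bound and the rapid decay of $t\mapsto e^{-t^{\delta}}$ through the three nested $o(1)$'s coming from~\eqref{eq:A1}, from~\eqref{eq:A2}, and from the tail estimate, and to verify they all collapse into the single $o(1)$ in the final exponent. A secondary point is that~\eqref{eq:A3} is being invoked precisely in the regime where $\calO_\varphi(P)$ genuinely misses $V$ over $\Q$, and that its $(1+o(1))$ is to be read as perturbing only the sub-leading behaviour.
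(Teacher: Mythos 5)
Your proposal is correct and follows essentially the same route as the paper: a single-prime estimate from~\eqref{eq:A1}--\eqref{eq:A2} giving $\prob(\Op\cap V(\ffp)=\emptyset)=e^{-p^{d_2-d_1/2+o(1)}}$, then assumption~\eqref{eq:A3} to pass to the product over $p>T$. Your Step 2 simply fills in the evaluation of $\prod_{p>T}\bigl(1-e^{-p^{d_2-d_1/2+o(1)}}\bigr)=1-e^{-T^{d_2-d_1/2+o(1)}}$, which the paper asserts without detail, and your closing remark about absorbing the $(1+o(1))$ from~\eqref{eq:A3} matches how the paper implicitly treats it.
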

		\begin{remark}\label{rem:big-size-ok}
			The proposition still holds under a relaxed orbit size
  assumption: it is sufficient to assume $|\Op| = p^{d_{1}'/2+o(1)}$ for some $d_1'\in \Z_{\geq0}$ such that $d_{2}>d_{1}'/2$.
		\end{remark}

		\begin{proof}
			By assumption~\eqref{eq:A1}, $|\Op| = p^{d_{1}/2+o(1)}$, and by the Weil conjectures, $|V(\ffp)| = p^{d_{2}+o(1)}$, $|X(\ffp)| = p^{d_{1}+o(1)}$.  Therefore, assumption~\eqref{eq:A2} gives that 
			\begin{align*}
				\prob( \Op \cap V(\ffp) = \emptyset  )
				& = \left(1-p^{d_{2}-d_{1}+o(1)}\right)^{|\Op|}
				  = \left(1-p^{d_{2}-d_{1}+o(1)}\right)^{p^{d_{1}/2+o(1)}}\\
				& = e^{-p^{d_{1}/2-(d_{1}-d_{2})+o(1)}}
				  = e^{-p^{d_{2}-d_{1}/2+o(1)}}.
			\end{align*}
			In particular, 
			\[
				\prob( \Op \cap V(\ffp) \neq \emptyset  ) 
				= 1 - e^{-p^{d_{2}-d_{1}/2+o(1)}}, 
			\]
			and therefore, by assumption~\eqref{eq:A3}, \change{
			\[
				\prob( \Op \cap V(\ffp) \neq \emptyset  \ \forall p>{T}  ) 
				= \prod_{p > T} (1-e^{-p^{d_{2}-d_{1}/2+o(1)}} )
				= 1 - e^{-T^{d_{2}-d_{1}/2+o(1)}}.
			\]}
		\end{proof}

	\subsection{Empty intersections for some composite $m$}\label{sec:mod-m}

		The situation is quite different over composite integers $m$.  Indeed, the main result of this section is that, assuming some further randomness properties, there exist integers $m$ such that the probability that $C_m$ and $V(\Z/m\Z)$ are disjoint is arbitrarily close to $1$.

		We begin by recalling some background on smooth numbers.  
		\begin{defn}
			An integer $n$ is \defi{$y$-smooth} if all primes $p$ dividing $n$ are bounded above by $y$.  Define
			\[
				\psi(x,y) := |\{ n \leq x : n \textup{ is } y-\textup{smooth}\}|.
			\]
		\end{defn}
		Smooth integers have the following well-known
                distribution~\cite[Thm. 10,
                p. 97]{TMF-PrimesAndDistributions}
for  $\alpha \in (0,1)$ and $x$ tending to infinity,
		\[
			\psi(x,x^{\alpha}) = (1+o(1)) \cdot x \rho(u)
		\]
		where \change{$u := \log x/\log x^\alpha = 1/\alpha$},  and $\rho(u) \in
                 (0,1)$ for $u \in (1,\infty)$.

		Our analysis will be based on the following heuristic:
                that $|C_{p}|$ has the same ``likelihood'' of being
                smooth as a random integer of the same size.
                By~\eqref{eq:A1}, $|C_{p}| = p^{d_{1}/2+o(1)}$, so the
                heuristic implies that the density of primes $p$ for
                which $|\cyclicp| = p^{d_{1}/2+o(1)}$ is
                $p^{\alpha}$-smooth equals $\rho(u)$ where $u = \log
              p^{d_{1}/2+o(1)}/\log p^{\alpha} = d_{1}(1+o(1))/(2\alpha)$, and hence that
		\[
			\prod_{\substack{p \leq x\\|C_{p}| \textup{ is }x^{\alpha} 
			\textup{-smooth}}} p = 
			\exp\left(\sum_{\substack{p \leq x\\
			|C_{p}| \textup{ is }x^{\alpha}\textup{-smooth}}} \log p\right)
			= \exp\left(x \cdot \rho
                          \left(\frac{d_1(1+o(1))}{2\alpha} \right) \right).
		\]
		This heuristic leads us to the following precise 
		\defi{cycle length smoothness assumption}: For
                $\alpha$ and $d_{1}$ fixed and $x\to \infty$, 
		\begin{equation} \label{eq:A4}
			\prod_{\substack{p \leq x\\|C_{p}| \textup{ is }
			x^{\alpha}\textup{-smooth}}} p = 
			\exp\left(x \cdot \rho
                          \left( \frac{d_1(1+o(1))}{2\alpha} \right) \right).
		\end{equation}

		\begin{proposition}\label{prop:ProbResult}
			Assume~\eqref{eq:A1}--\eqref{eq:A4}. Then there exists a sequence of  \emph{squarefree} integers $m$ such that
			\[
				\prob( \Cm \cap V(\Z/m\Z) = \emptyset  ) = 1 - o(1)
			\]
			as $m \to \infty$.
		\end{proposition}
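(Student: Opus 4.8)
The plan is to exploit that, for a well-chosen composite modulus $m$, the cyclic part $C_m$ can be forced to be vastly smaller than the trivial bound $\prod_{p\mid m}|C_p|$, while $m$ itself stays large: once $\log|C_m|$ is of strictly smaller order than $\log m$, and the ``density'' of $V$ inside $X$ modulo $m$ is $m^{-(d_1-d_2)+o(1)}$, a first-moment bound closes the argument. I will assume throughout that $d_2<d_1$; if some irreducible component of $V$ has dimension $d_1$, then for all but finitely many $p$ the reduction of $\calO_{\varphi}(P)$ misses $V(\F_p)$ entirely, and one may take $m=p$. Concretely, I would fix $\alpha\in(0,1)$ and, for a parameter $x\to\infty$, set
\[
	m=m(x):=\prod_{\substack{p\le x,\ p\notin S\\ |C_p|\ \text{is}\ x^{\alpha}\text{-smooth}}} p,
\]
a squarefree integer coprime to $S$. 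By the cycle-length smoothness assumption~\eqref{eq:A4}, $\log m=x\,\rho\!\left(\tfrac{d_1}{2\alpha}\right)(1+o(1))$; in particular $m\to\infty$ as $x\to\infty$, and it suffices to show $\prob\bigl(C_m\cap V(\Z/m\Z)\ne\emptyset\bigr)=o(1)$.

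Next I would establish two size estimates. By the Chinese Remainder Theorem, $X(\Z/m\Z)=\prod_{p\mid m}X(\F_p)$ with $\varphi_m$ acting coordinatewise, so the forward orbit of $P_m$ enters a cycle of length $\lcm\{\,|C_p|:p\mid m\,\}$, whence $|C_m|=\lcm\{\,|C_p|:p\mid m\,\}$. Since each $|C_p|=p^{d_1/2+o(1)}\le x^{d_1/2+o(1)}=:B$ by~\eqref{eq:A1}, and each $|C_p|$ is $x^{\alpha}$-smooth, the $q$-adic valuation of $|C_m|$ is at most $\log B/\log q$ for every prime $q\le x^{\alpha}$; hence $|C_m|\le B^{\pi(x^{\alpha})}$ and
\[
	\log|C_m|\le\pi(x^{\alpha})\log B=\bigl(\tfrac{d_1}{2\alpha}+o(1)\bigr)x^{\alpha}.
\]
For the density: conditioning on the orbit structure modulo $m$ and then applying~\eqref{eq:A2} at each prime together with the asymptotic independence across primes, the probability that a prescribed point of $C_m$ lies in $V(\Z/m\Z)$ is $\prod_{p\mid m}p^{-(d_1-d_2)+o(1)}=m^{-(d_1-d_2)+o(1)}$.

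I would then combine the two estimates via a first-moment (union) bound over the $|C_m|$ points of $C_m$:
\[
	\prob\bigl(C_m\cap V(\Z/m\Z)\ne\emptyset\bigr)\le\EE\bigl[\,|C_m\cap V(\Z/m\Z)|\,\bigr]\le|C_m|\cdot m^{-(d_1-d_2)+o(1)},
\]
so that
\[
	\log\prob\bigl(C_m\cap V(\Z/m\Z)\ne\emptyset\bigr)\le\bigl(\tfrac{d_1}{2\alpha}+o(1)\bigr)x^{\alpha}-(d_1-d_2)\,\rho\!\left(\tfrac{d_1}{2\alpha}\right)x\,(1+o(1)).
\]
Because $\alpha<1$ gives $x^{\alpha}=o(x)$, while $d_1-d_2\ge1$ and $\rho\bigl(d_1/(2\alpha)\bigr)>0$, the right-hand side tends to $-\infty$. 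Hence $\prob\bigl(C_m\cap V(\Z/m\Z)=\emptyset\bigr)=1-o(1)$ along the squarefree sequence $m=m(x)$ as $x\to\infty$, as claimed.

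The step I expect to be the main obstacle is the justification of the per-point density $m^{-(d_1-d_2)+o(1)}$: assumption~\eqref{eq:A2} controls only a single prime, and the independence heuristic behind~\eqref{eq:A3} is phrased for a fixed rational point, whereas the points of $C_m$ are neither independent as $n$ varies nor rational, so one must argue that conditioning on the cyclic data modulo $m$ really does decouple the primes dividing $m$. A secondary, more cosmetic issue is that $m=m(x)$ depends on $\varphi$ (through which $|C_p|$ happen to be smooth), so the statement is to be read with this $\varphi$-dependent choice of $m$ — or, for a genuinely deterministic sequence, one extracts a cofinal subsequence along which $m(x)$ obeys the prediction of~\eqref{eq:A4}.
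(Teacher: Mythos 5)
Your proposal follows essentially the same route as the paper: the same choice $m=\prod_{p\le x,\ |C_p|\ x^{\alpha}\text{-smooth}}p$, the same $\lcm$/smoothness bound giving $\log|C_m|=O(x^{\alpha})$ while \eqref{eq:A4} gives $\log m=\Theta(x)$, and a final estimate (your first-moment bound in place of the paper's $(1-m^{-(d_1-d_2)})^{|C_m|}=e^{-|C_m|/m^{d_1-d_2}}$) that is asymptotically the same, with the paper fixing $\alpha=1/3$ and likewise implicitly assuming $d_2<d_1$ and the per-point density modulo $m$ that you flag as the delicate heuristic step. So the argument is correct and matches the paper's proof in substance.
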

		\begin{proof}
			Define
				\[
					m_{x,\alpha} := \prod_{\substack{p \leq x\\|\cyclicp| 
					\textup{ is }x^{\alpha}\textup{-smooth}}} p.
				\]
			Since for any square free integer $M$, $|C_M|
                        = \lcm_{p|M}  |\cyclicp|$, we have, as $x \to \infty$,
			\begin{align*}
				|C_{m_{x,\alpha}}| \leq & \prod_{p \leq x^{\alpha}} 
				p^{\log x^{d_{1}/2+o(1)}/\log(p)} =
				\exp\left(\sum_{p \leq x^{\alpha}} (d_{1}/2+o(1))\log x  \right)\\
				=& \exp\left( (d_{1}/2+o(1)) \cdot\log x \cdot \pi(x^\alpha) \right)
				= \exp\left( x^{\alpha} \cdot \frac{d_{1} \cdot (1+o(1))}{2\alpha} 
				\right).
			\end{align*}
			In particular, $|C_{m_{x,1/3}}| = \exp(O({x}^{1/3} )).$
			Therefore, if we take 
			\[
				m = m_{x,1/3} = \prod_{\substack{p \leq x\\
				|\cyclicp| \textup{ is }x^{1/3}\textup{-smooth}}} p
			\]
			we find that $m = \exp(\Theta(x))$, thus 
			$|\Cm| = \exp(O({x}^{1/3})) = m^{o(1)}$, and hence
			\begin{align*}
				\prob( \Cm \cap V(\Z/m\Z) = \emptyset) = &
				\left(1 - \frac{1}{m^{d_{1}-d_{2}}} \right)^{|\Cm|}\\
				= & \left(1 - \frac{1}{m^{d_{1}-d_{2}}}
				\right)^{m^{d_{1}-d_{2}}\frac{|\Cm|}{m^{d_{1}-d_{2}}}} 
				= e^{-\frac{|C_m|}{m^{d_1 - d_2}}}
				= e^{-o(1)} = 1-o(1).
			\end{align*}
			as $x \to \infty$.
			\end{proof}

\section{Computations}\label{sec:computations}

	\subsection{Cycle length smoothness assumption}

	We ran experiments, detailed below, to justify the assumption~\eqref{eq:A4} on the smoothness of the cycle lengths. 
	Our experiments do not confirm this assumption. Fortunately, it is clear from the proof of proposition \ref{prop:ProbResult} 
	that we only need that the cycle lengths be at least as smooth as the prediction~\eqref{eq:A4} and this is 
	what we see in the experiments. We also found some maps with special properties for which the cycle lengths are even smoother. 
	We conjecture that cycle lengths be at least as smooth as the prediction~\eqref{eq:A4} in all cases but we don't know how 
	to explain the extra smoothness shown in the experiments.

	We will consider three maps:
	\begin{align*}
		\varphi\colon \PP^1\to\PP^1, & \quad (x:y) \mapsto (x^2 + 5y^2: y^2),\\
		\psi\colon\PP^2\to\PP^2, &\quad \left( x: y: z\right) \mapsto \left( x^2+y^2:x^2+3y^2-2xy+z^2: z^2 \right),\textup{ and}\\
		\sigma\colon\PP^3\to\PP^3, & \quad \left(x:y:z:w\right) \mapsto \left(x^2+y^2-z^2 + yw+w^2: x^2-xy+xz+2w^2:\right. \\
		& \hspace{7.5cm} \left. z^2-yz+xz+3w^2:w^2 \right).
	\end{align*}
For each prime less than 100,000, 500,000 and 1,000,000 respectively we compute $C_p$, the length of the periodic cycle length of 
$\left[1:1\right]$, $\left[1:1:1\right]$, or $\left[1:1:1:1\right]$ under $\varphi$, $\psi$, and $\sigma$ respectively in $\mathbb{F}_p$. 
Setting $\alpha = 1/3$, we compute $S \left( x \right) := \left[\displaystyle\prod_{\substack{p \leq x\\|C_{p}| \textup{ is } x^{\alpha}\textup{-smooth}}} p \right]$ 
at each prime in the range specified. We then create the graphs below 
which compare $\log S\left( x \right)$ to the predicted value of $x \cdot \rho \left( u \right)$ where $u = \frac{d_1}{2 \alpha}$ and $\rho$ is the Dickman $\rho$-function.  
Recall that assumption~\eqref{eq:A4} states that $\log S(x)$ should
behave linearly; the graphs (figures 1, 2, and 3 below) support this assumption. 
The data appears linear for large enough $x$, with slope at least as big as predicted.

All computations were performed using C and Sage 4.8~\cite{sage}. 

\begin{figure}[ht]\label{dim1}
	\includegraphics[scale=.5]{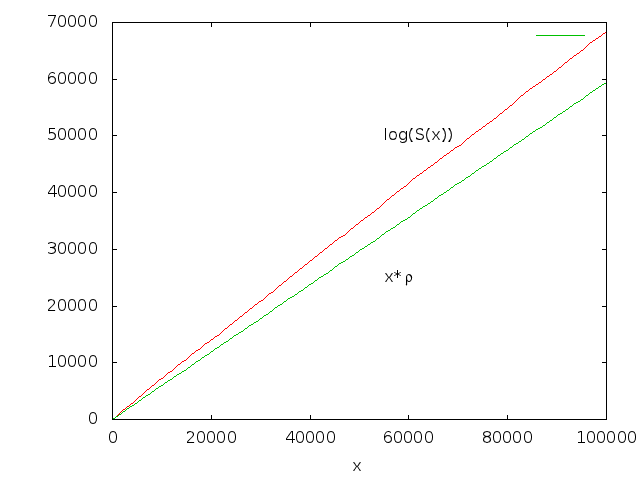}
	\caption{$\log \left( S \left(x \right) \right)$ for $\varphi \left( x \right) $ and $x<100,000$}
\end{figure}
 
\begin{figure}[h!]\label{dim2}
	\includegraphics[scale=.5]{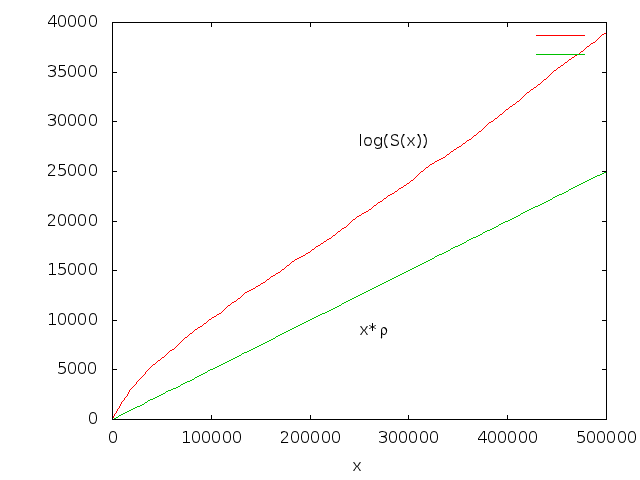}
	\caption{$\log \left( S \left(x \right) \right)$: for $\psi \left( x \right)$ and $x<500,000$}
\end{figure}
 
\begin{figure}\label{dim3}
	\includegraphics[scale=.5]{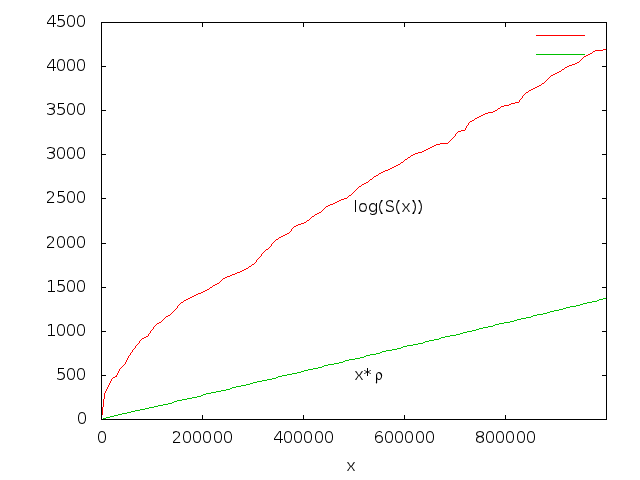}
	\caption{$\log \left( S \left(x \right) \right)$ for $\sigma \left( x \right)$ and $x<1,000,000$}
\end{figure}

\newpage
\subsection{Experiments}
		In this section, we let $X = \A^5$ and let $V = V(1 - x^2 - y^2 - z^2 - w^2 - v^2)$.  Fix a point $P\in X(\Z)$ and consider a morphism $\phi\colon X \to X$ with integer coefficients; then $\OO_{\phi}(P)\subseteq X(\Z)$.  As $V$ contains few integral points, namely only those points with exactly one coordinate equal to $\pm 1$ and the remaining coordinates $0$, one expects the intersection $\calO_{\phi}(P)\cap V$ to be empty. Thus, by the arguments in~\S\ref{sec:prob_proof}, we expect to find an integer $m$ such that $O_m(P)\cap V(\Z/m\Z) = \emptyset.$
		
		We considered a fixed integer starting point $P$ and $500$ randomly generated morphisms from $X\to X$ with integer coefficients.  For each of these morphisms, we computed whether the intersection $O_m(P)\cap V(\Z/m\Z)$ was empty for some positive integer $m\leq 2000$.  The results are as follows:
		\begin{enumerate}
			\item For $86.8\%$ of these maps, i.e., $434$ out of $500$, there exists a prime power $q\leq 2000$ such that $O_q(P)\cap V(\Z/q\Z) = \emptyset$.
			\item For $96.2\%$ of these maps, i.e., $481$ out of $500$, there exists a positive integer $m\leq 2000$ such that $O_m(P)\cap V(\Z/m\Z) = \emptyset$.
			\item For $11$ of the $19$ remaining maps, we conclude that $V\cap\OO_{\varphi}(P) = \emptyset$ by finding an integer $2000<m<11,500$ such that $O_m(P)\cap V(\Z/m\Z) = \emptyset$.  In each of these cases, the smallest such $m$ is supported at more than $1$ prime.
			\item For the remaining $8$ maps, $O_m(P)\cap V(\Z/m\Z) \neq \emptyset$ for all $m < 11,500$.  However, we are still able to conclude that $V\cap\OO_{\varphi}(P) = \emptyset$ by showing that the image of $V(\Z)$ modulo $7$ is disjoint from $O_7(P).$
		\end{enumerate}
		All computations were performed using \texttt{Magma}~\cite{Magma}.
	
\section{A dynamical Hasse principle for \'etale morphisms}
\label{sec:DynamicalHasse}

	\subsection{Notation}
		
		Let $K$ be a global field, and let $M_K$ denote its set of places.  For a finite set $S$ of places of $K$ containing all the archimedean places, we write $\OO_{K,S}$ to denote the ring of $S$-integers.  For all $v\in M_K$, we use $K_v$ to denote the $v$-adic completion.  If $v$ is nonarchimedean, we write $\OO_v$, $\mm_v$, and $k_v$ for the valuation ring, maximal ideal, and residue field of $v$, respectively.
		
		Let $X$ denote a $K$-variety, i.e., a reduced separated scheme of finite type over $K$, let $V\subseteq X$ denote a closed $K$-subvariety, and let $\varphi \colon X\to X$ denote a $K$-endomorphism.  For any $K$-variety $Y$, define
		\begin{equation}\label{define_XKS}
			Y(K,S) := \prod_{v\notin S} Y(K_v).
		\end{equation}
		We equip $Y(K_v)$ with the $v$-adic topology and $Y(K, S)$ with the product topology.  We view $Y(K)$ as a subset of $Y(K,S)$ via the diagonal embedding.  For every subset $T$ of $Y(K,S)$ or $Y(K_v)$, we write $\cC(T)$ or $\cC_v(T)$ for the closure of $T$ in the product topology or $v$-adic topology, respectively.  
		
		Since $Y$ is separated, $Y(K_v)$, and hence $Y(K,S)$, is Hausdorff. We note that as $Y$ is not assumed to be projective, $Y(K,S)$ need not agree with the adelic points of $Y$.  For basic terminologies and properties of scheme theory, we refer the readers to \cite{Hartshorne}, for properties of smooth and \'etale morphisms used throughout this section, we refer the readers to \cite{EGA4}.
		
	\subsection{The dynamical Hasse principle for \'etale maps and preperiodic 
	subvarieties}

		For any point $P\in X(K)$, we have the following containments:
		\[
			V(K)\cap 
			\calO_{\varphi}(P)\subseteq V(K,S)\cap\cC(\calO_{\varphi}(P))
			\subseteq \prod_{v\notin S} V(K_v)\cap \cC_v(\calO_{\varphi}(P)).
		\]
		Recall the definition by Hsia-Silverman \cite[p.237--238]{hsiasilverman}
		that $(X,V,\varphi)$ is said to be dynamical Brauer-Manin $S$-unobstructed 
		if the leftmost containment is an equality for every $P\in X(K)$ 
		satisfying $\cO_{\varphi}(P)\cap V^{\text{pp}}=\emptyset$, where
		$V^{\text{pp}}$ is the union of all positive dimensional
		preperiodic subvarieties of $V$. In analogy, we define the 
		dynamical Hasse principle:
		
		\begin{definition}\label{def:Hasse}
		The triple $(X,V,\varphi)$ is said to satisfy the dynamical Hasse principle 
		(over $K$) if
		for every $P\in X(K)$ such that $\cO_{\varphi}(P)\cap V(K)=\emptyset$,
		there exists a place $v$ (depending on $P$) such that 
		$\cC_v(\cO_{\varphi}(P))\cap 
		V(K_v)=\emptyset$.  If there are infinitely many such places $v$, 
		we say that
		$(X,V,\varphi)$ satisfies the strong dynamical Hasse principle.		 
		\end{definition}

		When $V=V^{\text{pp}}$, if $(X,V,\varphi)$ satisfies the strong
		dynamical Hasse principle then it is immediate that 
		$(X,V,S)$ is Brauer-Manin $S$-unobstructed for every $S$.
		The reason is that for every $P\in X(K)$ such that
		$\cO_{\varphi}(P)\cap V(K)=\emptyset$, both containments
		above are equalities since all the three sets are the empty set.
		Our main results in this section are the following:
		
		\begin{theorem}\label{thm:DynamicalHasse}
			Assume that $X$ is quasi-projective, that $\varphi$ is \'etale, and that $V$ is $\varphi^m$-invariant, i.e., that $\varphi^{m}(V) \subseteq V$, for some $m\in\Z_{>0}$.  If $P\in X(K)$ is such that $V(K)\cap \calO_{\varphi}(P)= \emptyset$ then, for all but finitely many primes $v$,
			\[
				V(K_v)\cap \cC_v(\calO_{\varphi}(P)) = \emptyset
			\]
			Consequently, $(X, V, \varphi)$ satisfies the strong dynamical Hasse 
			principle.
		\end{theorem}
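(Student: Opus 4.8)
The plan is to reduce to a statement at one place $v$ and then feed the orbit into the $p$-adic interpolation theorem for étale maps of Bell, Ghioca, and Tucker; the invariance hypothesis is then used only to show the interpolating arc cannot meet $V$.

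\emph{Reductions.} First I reduce to the case $m=1$. Set $\psi:=\varphi^{m}$; it is again étale and $\psi(V)\subseteq V$, and $\OO_\varphi(P)=\bigcup_{j=0}^{m-1}\OO_\psi(\varphi^{j}(P))$, where each $\varphi^{j}(P)\in X(K)$ satisfies $\OO_\psi(\varphi^{j}(P))\subseteq\OO_\varphi(P)$ and hence meets $V(K)$ trivially. Since $\cC_v(\OO_\varphi(P))=\bigcup_{j=0}^{m-1}\cC_v(\OO_\psi(\varphi^{j}(P)))$ is a finite union, it suffices to treat $\psi$ with the finitely many starting points $\varphi^{j}(P)$; so from now on I assume $\varphi(V)\subseteq V$, hence $\varphi^{k}(V)\subseteq V$ for all $k\ge 0$. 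Next, spread $X,V,\varphi,P$ out to flat models $\scrX,\scrV,\tilde\varphi,\scrP$ over $\OO_{K,S}$ for a suitable finite $S$, with $\tilde\varphi$ étale over $\OO_{K,S}$. Fix $v\notin S$. Because $\Spec\OO_v$ is local, any $\OO_v$-point of $\scrX$ factors through an affine chart, so $\scrX(\OO_v)$ is a finite union of compact sets, hence compact; thus $\OO_\varphi(P)\subseteq\scrX(\OO_v)$ and $\cC_v(\OO_\varphi(P))\subseteq\scrX(\OO_v)$. I will show $\cC_v(\OO_\varphi(P))\cap V(K_v)=\emptyset$ for all but finitely many $v$.

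\emph{The arc lemma.} Since $\scrX(k_v)$ is finite, $\OO_\varphi(P)\bmod v$ is eventually periodic; discarding the finitely many iterates before the period begins — each a $K$-point of $X$ not lying on $V$, hence not in $V(K_v)$ since $V(K_v)\cap X(K)=V(K)$ — I may assume $\bar P$ is periodic for $\bar\varphi$. Now I invoke the Bell--Ghioca--Tucker interpolation theorem for étale maps (in positive characteristic, its known analogue), which is where the good-reduction and étale hypotheses enter: for all but finitely many $v$ there is a positive integer $c$ (a multiple of the period of $\bar P$, obtained by passing to iterates and rescaling) such that for every $y\in\scrX(\OO_v)$ whose reduction is fixed by $\bar\varphi^{c}$, the $\varphi^{c}$-orbit of $y$ is interpolated by a $v$-adic analytic map $\gamma_y\colon\Z_p\to\scrX(\OO_v)$, where $p$ is the rational prime below $v$, with $\gamma_y(0)=y$, $\gamma_y(n)=\varphi^{cn}(y)$ for $n\in\Z_{\ge0}$, and $\varphi^{c}(\gamma_y(z))=\gamma_y(z+1)$ for all $z\in\Z_p$. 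Applying this to $y=\varphi^{s}(P)$ for $s=0,\dots,c-1$ (whose reductions are all $\bar\varphi^{c}$-fixed), and using that $\Z_{\ge0}$ is dense in $\Z_p$ while each $\gamma_y(\Z_p)$ is compact, one gets
\[
\cC_v(\OO_\varphi(P))=\bigcup_{s=0}^{c-1}\gamma_{\varphi^{s}(P)}(\Z_p).
\]

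\emph{Conclusion.} Fix one such arc $\gamma=\gamma_y$ with $y=\varphi^{s}(P)$. Its image lies in the residue disc of $\bar y$, which sits inside a single affine chart of $\scrX$, and there $V$ is cut out by finitely many regular functions; pulling these back along $\gamma$ shows $\gamma^{-1}(V)$ is the common zero locus in $\Z_p$ of finitely many $v$-adic analytic functions, so by Strassmann's theorem it is either all of $\Z_p$ or finite. If $\gamma^{-1}(V)=\Z_p$ then $y=\gamma(0)$ lies on $V$; being a $K$-point, $y\in V(K)$, contradicting $\OO_\varphi(P)\cap V(K)=\emptyset$. If $\gamma^{-1}(V)$ is finite and contains some $z_0$, then for every $n\in\Z_{\ge0}$ the functional equation gives $\gamma(z_0+n)=\varphi^{cn}(\gamma(z_0))\in\varphi^{cn}(V)\subseteq V$, so $z_0+\Z_{\ge0}\subseteq\gamma^{-1}(V)$ is infinite — a contradiction. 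Hence $\gamma^{-1}(V)=\emptyset$, i.e. $\gamma(\Z_p)\cap V(K_v)=\emptyset$. Therefore $\cC_v(\OO_\varphi(P))\cap V(K_v)=\emptyset$ for all but finitely many $v$, which is the assertion of the theorem; since there are infinitely many such $v$, the strong dynamical Hasse principle follows.

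\emph{Main obstacle.} The crux is the interpolation theorem itself: producing, at almost all $v$, a $v$-adic analytic arc through the orbit that commutes with an iterate of $\varphi$. This is most delicate when $\scrX$ is singular along the orbit and at small residue characteristic, and the positive-characteristic case requires the positive-characteristic analogue of the Bell--Ghioca--Tucker construction. I emphasize that no stabilization of the increasing chain $V\subseteq\varphi^{-1}(V)\subseteq\varphi^{-2}(V)\subseteq\cdots$ is used: that chain need not stabilize (e.g. $\varphi\colon x\mapsto x^{2}$ on $\G_m$ with $V=\{1\}$), and the invariance $\varphi^{k}(V)\subseteq V$ enters only to propagate a single $v$-adic zero forward along an orbit-arc.
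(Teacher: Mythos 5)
Your route is essentially the paper's own \emph{alternative} proof: the paper first proves a local statement (Theorem~\ref{thm:adic_closure}) and then records, for $F$ a finite extension of $\Q_p$, exactly your argument --- after passing to an iterate fixing $\bar P$, uniformize the orbit by Bell--Ghioca--Tucker/Amerik, and use the invariance of $\scrV$ together with the fact that a nonzero $p$-adic analytic function on $\Z_p$ has only finitely many zeros to rule out any intersection point. Over a number field, and for smooth $X$, your globalization by spreading out is sound (only the containment $\cC_v(\OO_\varphi(P))\subseteq\bigcup_{s}\gamma_{\varphi^{s}(P)}(\Z_p)$ is needed, and that is what your compactness argument actually gives; the claimed equality is neither justified nor required).

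There are, however, two genuine gaps against the theorem as stated. First, $K$ is an arbitrary global field, and your key input --- interpolation of the orbit, or ``its known positive-characteristic analogue'' --- does not exist in characteristic $p$: the paper remarks explicitly that uniformization of orbits fails when $\Char(F)>0$ (one cannot divide by $n!$ to build the needed exponential), and this is precisely why its main proof is algebraic rather than analytic. In that proof, \'etaleness makes $\widehat{\varphi^{*}}$ an automorphism of $\hat{\calO}_{\scrX,\bar P}\cong A[[T_1,\ldots,T_g]]$, Lemma~\ref{lem:map_I_to_I} upgrades $\widehat{\varphi^{*}}(\scrI_{\bar P})\hat{\calO}_{\scrX,\bar P}\subseteq \scrI_{\bar P}\hat{\calO}_{\scrX,\bar P}$ to an equality, and the resulting inequality $\|H(f(u))\|\geq\|H(u)\|$ keeps the orbit closure at distance at least $\|H(P)\|>0$ from $\scrV$ --- with no hypothesis on the characteristic. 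Second, $X$ is assumed only quasi-projective, not smooth, while the residue-disc/power-series picture underlying the interpolation theorem lives at smooth points; you flag singularity as ``delicate'' but give no argument. The paper closes this by Noetherian induction: since $\varphi$ is \'etale it preserves the smooth locus $X'$ and its complement $X''$, so one either inducts on $X''$ or works in $\scrX'(\calO_v)$, which is closed in $X(K_v)$ by compactness (Lemma~\ref{lem:topology}); you could import that reduction verbatim. As written, then, your proof establishes the theorem for number fields and smooth $X$, but not the full statement.
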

		
		We obtain a similar result when $V$ is $\varphi$-preperiodic, 
under the mild additional
		assumption that $\varphi$ is closed:
		\begin{theorem}\label{thm:new_global}
			Assume that $X$ is quasi-projective and that $\varphi$ is \'etale and closed. 
			Let $V$ be a preperiodic subvariety of $X$, which means that  
there exist 
			$k\geq 0$ and $m>0$
			such that $\varphi^{k+m}(V)\subset \varphi^k(V)$.
			For every $P\in X(K)$, if $V(K)\cap \calO_{\varphi}(P)= \emptyset$ then, for all but finitely many primes $v$,
			\[
				V(K_v)\cap \cC_v(\calO_{\varphi}(P)) = \emptyset
			\]
			Consequently, $(X, V, \varphi)$ satisfies the strong dynamical Hasse 
			principle.
		\end{theorem}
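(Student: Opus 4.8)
The plan is to deduce Theorem~\ref{thm:new_global} from Theorem~\ref{thm:DynamicalHasse} by passing to the subvariety swept out by the forward iterates of $V$, together with an induction on $\dim V$ to handle the case in which the orbit of $P$ falls into that subvariety. First I would reduce to the case that $V$ is irreducible: each irreducible component of a preperiodic subvariety is again preperiodic, and the conclusion for $V$ follows from the conclusion for each of its components (since $\calO_{\varphi}(P)$ avoids each component because it avoids $V$). Because $\varphi^{k+m}(V)\subseteq\varphi^k(V)$, the descending chain $\varphi^k(V)\supseteq\varphi^{k+m}(V)\supseteq\cdots$ stabilizes, say at $Z:=\varphi^{N}(V)$ with $\varphi^m(Z)=Z$; this is where the hypothesis that $\varphi$ is \emph{closed} enters, ensuring that $Z$ and every $\varphi^i(V)$ are genuine closed subvarieties, that $W:=\bigcup_{i\geq 0}\varphi^i(V)$ is a \emph{finite} union hence a closed subvariety with $\varphi(W)\subseteq W$, and that $\widetilde Z:=\bigcup_{i=0}^{m-1}\varphi^i(Z)$ is a closed subvariety with $\varphi(\widetilde Z)=\widetilde Z$. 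Note $\dim W=\dim Z=\dim V$, since the $\varphi^i$ are quasi-finite (being \'etale).

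Next I would split on whether $\calO_{\varphi}(P)$ meets $W$. If $W(K)\cap\calO_{\varphi}(P)=\emptyset$, then applying Theorem~\ref{thm:DynamicalHasse} to the triple $(X,W,\varphi)$ (here $W$ is $\varphi^1$-invariant) gives $W(K_v)\cap\cC_v(\calO_{\varphi}(P))=\emptyset$ for all but finitely many $v$, and since $V\subseteq W$ the same holds for $V$; this case is done without induction. Otherwise $\varphi^{n_0}(P)\in W(K)$ for some $n_0$. Since $\varphi(W)\subseteq W$, the entire tail $\{\varphi^n(P):n\geq n_0\}$ lies in $W$; and since $\varphi^i(V)\subseteq\widetilde Z$ once $\varphi^i(V)$ has stabilized, after discarding finitely many further terms the tail lies in $\widetilde Z$. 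The discarded head $P,\dots,\varphi^{n_0-1}(P)$ consists of $K$-points of $X$ not lying in $V$ (by hypothesis), hence, $V$ being closed, not lying in $V(K_v)$ for any $v$; so it suffices to control $\cC_v$ of the tail, which lies in the $v$-adically closed set $\widetilde Z(K_v)$. Writing the relevant tail pieces as $\psi$-orbits $\calO_{\psi}(R)$ with $\psi:=\varphi^m$ (still \'etale and closed on $X$) and $R\in\varphi^{i_0}(Z)(K)$, so that $\calO_{\psi}(R)\subseteq\varphi^{i_0}(Z)$, the problem becomes: for all but finitely many $v$, $(V\cap\widetilde Z)(K_v)\cap\cC_v(\calO_{\psi}(R))=\emptyset$.

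To finish the second case I would argue as follows. If $V\subseteq\widetilde Z$ then, being irreducible of dimension $\dim Z=\dim\widetilde Z$, $V$ is an irreducible component of $\widetilde Z=\bigcup_{i<m}\varphi^i(Z)$; as $\varphi$ permutes the finitely many components of $\widetilde Z$, the subvariety $V$ is $\varphi^{m'}$-periodic, hence $\varphi^{m'}$-invariant, for some $m'>0$, and Theorem~\ref{thm:DynamicalHasse} applies to $(X,V,\varphi)$ directly. If $V\not\subseteq\widetilde Z$ then $V\cap\widetilde Z$ is a proper closed subset of the irreducible $V$, so $\dim(V\cap\widetilde Z)<\dim V$, and I would conclude by the inductive hypothesis applied to $(X,\,V\cap\widetilde Z,\,\psi)$ and the point $R$, noting that $\calO_{\psi}(R)$ avoids $V\cap\widetilde Z$ because $\calO_{\varphi}(P)$ avoids $V$; combining the finitely many resulting statements (one per tail piece) with the harmless head terms gives the theorem for $V$. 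The base case $\dim V=0$ needs no induction: in the subcase just treated $W$ is a finite set, so $\calO_{\varphi}(P)$ is eventually periodic and the claim is a finite verification, while the $W$-disjoint case is again Theorem~\ref{thm:DynamicalHasse}.

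The hard part will be making this inductive step rigorous, and two issues are responsible. First, \'etaleness is not preserved by restriction to an invariant closed subvariety (it can fail at singular points of $\widetilde Z$), so one must \emph{not} recurse ``inside'' $\widetilde Z$; keeping the ambient space $X$ and using the \'etale closed map $\psi=\varphi^m$ is precisely what circumvents this. Second --- and this is the genuinely delicate point --- one must know that $V\cap\widetilde Z$ is again $\psi$-preperiodic before invoking the inductive hypothesis. This ought to follow from $V$ being $\varphi$-preperiodic (so that the $\varphi^m$-iterates of $V$ are eventually \emph{constant}) together with $\varphi^m$-invariance of $\widetilde Z$, but because $\varphi$ need not be injective, intersecting with $\widetilde Z$ can a priori interact badly with the iteration; verifying it --- or, when it would fail, exhibiting a genuinely $\varphi^{m'}$-periodic subvariety containing $V\cap\widetilde Z$ and missed by $\calO_{\varphi}(P)$, which again reduces to Theorem~\ref{thm:DynamicalHasse} --- is where the structure theory of preperiodic subvarieties under \'etale maps must be brought in.
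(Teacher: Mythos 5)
Your overall skeleton matches the paper's: split according to whether the orbit meets the forward images of $V$, handle the disjoint case by Theorem~\ref{thm:DynamicalHasse} applied to an invariant union of iterates, and otherwise trap the tail of the orbit inside the periodic part and induct on dimension via the intersection of $V$ with its periodic iterates. But the two points you leave unresolved are genuine gaps, and the second is the actual content of the paper's proof. First, your reduction to irreducible $V$ rests on the claim that every irreducible component of a preperiodic subvariety is preperiodic; this is false. For example (with $\varphi$ \'etale, say multiplication by $2$ on an abelian surface), take $V=\{Q\}\cup C$ with $C$ an invariant curve and $Q$ a non-preperiodic point satisfying $\varphi(Q)\in C$: then $\varphi(V)\subseteq V$, yet the component $\{Q\}$ is not preperiodic. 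The paper explicitly warns of this and instead argues that a component of maximal dimension is preperiodic and that a non-preperiodic component is eventually sent into an iterate of a higher-dimensional one, so an orbit closure meeting $V$ must meet a preperiodic component; some argument of this kind is needed in your write-up as well.

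Second, and more seriously, your inductive step requires that $V\cap\widetilde Z$ (the intersection of $V$ with its periodic cycle) be again preperiodic, and you explicitly do not prove this, offering only that it ``ought to follow'' or that one could otherwise exhibit a periodic subvariety containing it and missed by the orbit. This is exactly where the paper invests its effort: Lemma~\ref{lem:YcapY_1} shows that every irreducible component $W$ of the intersection of an irreducible preperiodic $Y$ with a periodic iterate $Y_1$ is preperiodic, and its proof is not formal --- it uses the normalizations of $Y_1$ and of $\varphi^{-1}(Y_1)$, semicontinuity of the fiber cardinality of the normalization map, and faithful flatness of the \'etale map $\varphi$ to show that every iterate $\varphi^n(W)$ is an irreducible component of one fixed closed set $\calZ$ (the union of the top-dimensional components of the loci where the normalization has at least a given number of preimages), which forces preperiodicity of $W$. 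Nothing in your proposal substitutes for this; without it the induction hypothesis cannot be invoked, since the intersection could a priori have non-preperiodic components just as $V$ itself can, and your fallback suggestion is unsubstantiated. So the proposal has the right frame but is missing the key lemma; the remaining structure (your case split via $W=\bigcup_i\varphi^i(V)$ versus the paper's split via $V_1=\varphi(V)$, the use of closedness of $\varphi$ to keep images closed, and the reduction of the tail to $\varphi^m$-orbits) is sound and essentially the paper's argument in mildly different clothing.
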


Notice that in the theorems, we do not suppose that $V$ is irreducible. But 
it is clear that the general case reduces to this. Indeed, for instance in
\ref{thm:new_global}, let $C$ be an irreducible component of $V$ such that
the closure of the orbit of $P$ meets $C$. Though the component $C$
does not have to be preperiodic in general, $C$ is preperiodic when it is
of maximal dimension (thanks to the \'etaleness of $\varphi$). Moreover, if 
$C$ is not preperiodic, then some power of $\varphi$ sends it to a component
of $V$ of greater dimension. Thus, if the closure of the orbit of $P$ meets 
$C$, it necessarily meets some preperiodic component of $V$, so we may
replace $V$ by this component.

\medskip

		In~\S\ref{subsec:local} we prove a local version of Theorem~\ref{thm:DynamicalHasse}.  Next in~\S\ref{subsec:global}, we show how Theorem~\ref{thm:DynamicalHasse} follows from the local version, Theorem~\ref{thm:adic_closure}.  In~\S\ref{subsec:preinvariant} we prove Theorem~\ref{thm:new_global} and in~\S\ref{subsec:closing} we give some closing remarks.

	\subsection{The local statement}\label{subsec:local}
		Throughout this section, we work locally.  Let $A$ denote a complete discrete valuation ring, $\mm$ its maximal ideal, and $k$ its residue field; we will assume that $k$ is perfect.  We write $F$ for the fraction field of $A$.

		The goal of this section is to prove the following:
		\begin{theorem}\label{thm:adic_closure}
			Let $\scrX$ be a smooth quasi-projective scheme over $A$, let $\varphi$ be an \'etale endomorphism of $\scrX$, and let $\scrV$ be	 a reduced and closed subscheme of $\scrX$ such that $\varphi^{M}(\scrV)\subseteq\scrV$ for some $M\geq 1$.  Let $P\in \scrX(A)$ be such that the reduction $\bar{P}$ of $P$ modulo $\mm$ is preperiodic under $\varphi.$ If $\scrV(A)\cap\calO_{\varphi}(P)=\emptyset,$
			then $\scrV(A)$ does not intersect the $\mm$-adic closure of $\calO_{\varphi}(P)$.
		\end{theorem}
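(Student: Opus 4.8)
The plan is to reduce the $\mm$-adic statement to a statement about a single orbit "branch" landing in $\scrV$, and then use the étaleness of $\varphi$ to show that such a branch, if it exists $\mm$-adically, already exists over $A$ — contradicting $\scrV(A)\cap\calO_\varphi(P)=\emptyset$. Concretely, suppose some point $Q\in\scrV(A)$ lies in the $\mm$-adic closure of $\calO_\varphi(P)$. Then there is a sequence of iterates $\varphi^{n_j}(P)$ converging $\mm$-adically to $Q$. Since $\bar P$ is preperiodic modulo $\mm$, the residues $\varphi^{n}(\bar P)$ run through a finite set, so infinitely many of the $n_j$ have $\varphi^{n_j}(\bar P)$ equal to one fixed residue point $\bar y\in\scrX(k)$, and this $\bar y$ must equal $\bar Q\in\scrV(k)$. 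Passing to a subsequence, we may assume all the $\varphi^{n_j}(P)$ reduce to $\bar y$ and converge to $Q$.

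**Next I would** set up the key local-analytic input: because $\varphi$ is étale and $\scrX$ is smooth over the complete DVR $A$, for each $m\geq 1$ the map $\varphi$ induces a bijection on the residue disc (the set of $A$-points reducing to a given point), and more importantly $\varphi$ is a local isomorphism there, so it has local inverse branches on residue discs. Using preperiodicity of $\bar P$, there is a tail exponent $k_0$ and a period $d$ with $\varphi^{k_0+d}(\bar P)=\varphi^{k_0}(\bar P)$; restricting attention to $n_j\equiv k_0\pmod d$ and large, we can write $\varphi^{n_j}(P) = \Psi^{m_j}(\varphi^{k_0}(P))$ where $\Psi := \varphi^{d}$ restricted (étale, hence an automorphism in a formal/analytic neighborhood) fixes the residue point $\bar z := \varphi^{k_0}(\bar P)$. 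So the problem becomes: if the iterates $\Psi^{m_j}(z_0)$ of a point $z_0\in\scrX(A)$ (with $z_0 := \varphi^{k_0}(P)$, reducing to the $\Psi$-fixed point $\bar z$) converge $\mm$-adically to $Q\in\scrV(A)$, derive a contradiction with $\scrV(A)\cap\calO_\varphi(P)=\emptyset$.

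**The main step** is to exploit the structure of $\Psi$ acting on the residue disc of $\bar z$, identified via a smooth chart with $\mm^r$-points of affine space (some $r\geq 1$), where $\Psi$ becomes an analytic self-map fixing the origin with invertible linear part (étaleness). One then argues that the forward orbit $\Psi^{m}(z_0)$ stays in a $\Psi$-stable analytic neighborhood and that its limit points are themselves $\Psi$-fixed or lie in a proper $\Psi$-invariant analytic subset. Here I would invoke the invariance $\varphi^M(\scrV)\subseteq\scrV$: passing to a further power we may assume $\scrV$ is $\Psi^t$-invariant for suitable $t$, so $\scrV$ is a $\Psi^t$-stable analytic subset of the residue disc. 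The point $Q\in\scrV(A)\cap(\text{$\mm$-adic closure of the }\Psi\text{-orbit})$ then forces, by a compactness/completeness argument on $\scrX(A)$ (which is compact since $A$ is a complete DVR and $\scrX$ is of finite type — more precisely each residue disc is a profinite-type compact set), that some actual iterate $\Psi^{m}(z_0) = \varphi^{k_0 + md}(P)$ lands in $\scrV(A)$: one shows the orbit either eventually enters any given $\mm^N$-neighborhood of $\scrV$ for all $N$ in a way that is inconsistent with $\scrV$ being closed and the orbit being disjoint from it, OR the orbit is eventually trapped and we can solve for a genuine preimage of $Q$ inside the orbit using the analytic inverse branches of $\Psi$. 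Either way we contradict $\scrV(A)\cap\calO_\varphi(P)=\emptyset$.

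**The hard part will be** making the "limit point forces an actual orbit point in $\scrV$" step rigorous: naively, a convergent subsequence of an orbit need not have its limit in the orbit. The resolution must genuinely use that $\varphi$ is étale (so that on residue discs it and its iterates are open immersions, and inverse branches exist and are $\mm$-adically continuous) together with the $\varphi^M$-invariance of $\scrV$ — one wants to "pull back" the convergence $\Psi^{m_j}(z_0)\to Q$ through inverse branches of $\Psi$ to produce a point of $\scrV(A)$ of the form $\Psi^{-(\text{something})}(Q)$ that actually appears as an iterate of $P$, or else to show $Q$ itself is an iterate. I expect this is where one uses that, for $m_i < m_j$ both large and congruent mod $d$, the difference $\Psi^{m_j - m_i}$ maps $\Psi^{m_i}(z_0)$ to $\Psi^{m_j}(z_0)$, and as $m_i, m_j\to\infty$ along the subsequence these become $\mm$-adically close to a bijective analytic germ fixing $Q$; a limiting argument (using compactness of the relevant group/monoid of analytic germs, or a direct fixed-point/contraction estimate on $\mm^N/\mm^{N+1}$ for each $N$) then pins $Q$ down as lying in the closure of a genuinely periodic-or-preperiodic structure, ultimately placing a true iterate of $P$ in $\scrV(A)$. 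Handling the interaction of the (possibly nontrivial) linear part of $\Psi$ at $\bar z$ with the filtration by powers of $\mm$ — i.e. whether the dynamics is "attracting/neutral/expanding" mod $\mm^N$ — is the technical crux, and will likely require separating the orbit into finitely many residue-class-mod-$d$ subsequences and arguing level by level in the $\mm$-adic filtration.
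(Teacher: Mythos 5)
Your reductions (replacing $\varphi$ by an iterate so that $\scrV$ is invariant and the residue of $P$ is fixed, passing to the residue disc, identifying it with $\mm^g$ via a smooth chart, and noting that \'etaleness makes the induced map $\widehat{\varphi^*}$ on $\hat{\calO}_{\scrX,\bar P}\cong A[[T_1,\dots,T_g]]$ an automorphism, hence $\varphi$ an invertible analytic self-map of the disc) match the paper's setup. But the ``main step'' of your proposal is exactly where the proof has to happen, and you leave it as a list of unsubstantiated alternatives. The claim that a limit point $Q\in\scrV(A)$ of the orbit would be ``inconsistent with $\scrV$ being closed and the orbit being disjoint from it'' is circular: closedness of $\scrV(A)$ in no way prevents an orbit disjoint from $\scrV(A)$ from accumulating on it, and ruling this out is precisely the content of the theorem. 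Likewise, the assertion that limit points ``are themselves $\Psi$-fixed or lie in a proper $\Psi$-invariant analytic subset'' is neither justified nor obviously true, and the appeal to ``compactness of the relevant group/monoid of analytic germs'' is not an argument (also note $\scrX(A)$ is compact only when $k$ is finite, whereas the theorem assumes only that $k$ is perfect). You yourself flag this step as the hard part; as written, there is no mechanism in your proposal that converts $\mm$-adic accumulation on $\scrV$ into an actual iterate lying in $\scrV(A)$, or that keeps the orbit uniformly away from $\scrV$.

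The missing idea in the paper is algebraic, not dynamical: since $\scrV$ is reduced and excellent, the ideal $I=\scrI_{\bar P}\hat\calO_{\scrX,\bar P}$ is radical, and since $\widehat{\varphi^*}$ is a ring automorphism with $\widehat{\varphi^*}(I)\subseteq I$ (forward invariance), a lemma on radical ideals (Lemma~\ref{lem:map_I_to_I}, proved by comparing dimensions of minimal primes) forces the \emph{equality} $\widehat{\varphi^*}(I)\hat\calO_{\scrX,\bar P}=I$. Writing $H_1,\dots,H_m$ for generators of $I$ and $f$ for $\varphi$ in the chart, equality gives $H_i=\sum_j c_{ij}\,H_j(f)$, hence $\|H(f(u))\|\ge\|H(u)\|$ for all $u\in\mm^g$: the $\mm$-adic ``distance'' to $\scrV$ never decreases along the orbit. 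Since $P\notin\scrV(A)$, $\|H(P)\|>0$, so the closure of the orbit stays in $\{u:\|H(u)\|\ge\|H(P)\|\}$ and misses $\scrV(A)$. Note that mere forward invariance only yields the useless inequality $\|H(f(u))\|\le\|H(u)\|$; the equality of ideals, which uses both \'etaleness and reducedness, is indispensable, and nothing in your sketch supplies it or a substitute (the paper's alternative proof, via $p$-adic uniformization of the orbit and finiteness of zeros of a nonzero analytic function on $\Z_p$, is also absent from your proposal and in any case requires $F$ to be a finite extension of $\Q_p$). As it stands the proposal has a genuine gap at its central step.
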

		
		We beging with a few preliminary lemmas.
		\begin{remark}
			Lemmas~\ref{lem:topology}--\ref{lem:F_i} below are similar to results in~\cite[Sec. 2]{BGT_AJM} which were proved for $A = \Z_p$.
		\end{remark}

		\begin{lemma}\label{lem:topology}
			Let $\scrX$ be a separated $A$-scheme of finite type. Then we have the following.
			\begin{itemize}
				\item [(a)] $\scrX(A)$ is an open and closed subset of $\scrX(F)$, and it is compact if $k$ is finite.
				
				\item [(b)] For each $x\in\scrX(A)$, and each positive integer $n$, 
				\[ 
					C(x,n) := \{x'\in\scrX(A) : 
					x' = x \textup{ in }\scrX(A/\mm^n)\} 			
				\]
				is open and closed and the collection $\{C(x,n):\ n>0\}$ forms a basis of neighborhoods of $x$. Furthermore, for any $n\in\Z_{>0}$, the set $\scrX(A)$ is a disjoint union of subsets of the form $C(x,n)$; if $k$ is finite, then $\scrX(A)$ is a disjoint union of \emph{finitely many} such subsets.
			\end{itemize}
		\end{lemma}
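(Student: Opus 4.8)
I would prove (a) by reducing to the affine case, where everything is elementary, and handling closedness in general via a proper compactification; part (b) is then a direct computation in an affine chart. \emph{Part (a).} In the affine case $\scrX=\Spec\bigl(A[t_1,\dots,t_N]/I\bigr)$, the closed immersion $\scrX\hookrightarrow\A^N_A$ identifies $\scrX(A)$ with $\scrX(F)\cap A^N$ inside $\scrX(F)\subseteq\A^N(F)=F^N$, the latter carrying the product of the $v$-adic topologies. Since $F$ is discretely valued, $A$ is simultaneously open and closed in $F$ (for each $n$, both $\mm^n$ and its complement are open), so $A^N$ is clopen in $F^N$ and hence $\scrX(A)$ is clopen in $\scrX(F)$; if moreover $k$ is finite, then $A=\varprojlim_n A/\mm^n$ is profinite, hence compact, so $A^N$ is compact and its closed subset $\scrX(A)$ is compact. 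For a general separated $\scrX$ of finite type, cover it by finitely many affine opens $U_1,\dots,U_r$. The image of the closed point of any $A$-point of $\scrX$ lies in some $U_j$, and the only open subset of $\Spec A$ containing the closed point is $\Spec A$ itself, so that $A$-point factors through $U_j$; hence $\scrX(A)=\bigcup_j U_j(A)$. Each $U_j(A)$ is clopen in $U_j(F)$, which in turn is open in $\scrX(F)$, so $\scrX(A)$ is open in $\scrX(F)$; and when $k$ is finite each $U_j(A)$ is compact (it is closed in $F^{N_j}$ and contained in the compact set $A^{N_j}$), so $\scrX(A)$ is a finite union of compacts, hence compact, and hence closed in the Hausdorff space $\scrX(F)$.

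The one point that does not come for free by gluing the affine case is the closedness of $\scrX(A)$ in $\scrX(F)$ when $k$ is infinite, and I expect this to be the crux: $U_j(A)$ is only clopen inside the \emph{open} subset $U_j(F)$ and need not be closed in $\scrX(F)$. I would handle it by passing to a proper compactification $\scrX\hookrightarrow\overline{\scrX}$ over $A$ (Nagata's theorem; for $\scrX$ quasi-projective, the only case needed in the sequel, one may simply take the scheme-theoretic closure in some $\PP^N_A$). By the valuative criterion of properness, restriction $\overline{\scrX}(A)\to\overline{\scrX}(F)$ is a bijection, which one checks is a homeomorphism by reducing to $\PP^N$ — there both sides are $\bigl(A^{N+1}\setminus\mm^{N+1}\bigr)/A^\times$, the identification being, locally, multiplication of homogeneous coordinates by a locally constant power of a uniformizer. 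Under this identification, an $F$-point $x$ of $\scrX$ lies in $\scrX(A)$ exactly when the image of the closed point of its unique extension $\Spec A\to\overline{\scrX}$ lies in the open subscheme $\scrX\subseteq\overline{\scrX}$, i.e.\ exactly when the reduction $\overline{x}\in\overline{\scrX}(k)$ lies in $\scrX(k)$. The reduction map $\overline{\scrX}(A)\to\overline{\scrX}(k)$ is continuous for the discrete topology on $\overline{\scrX}(k)$ — in any affine chart it is constant on each residue class modulo $\mm$ — and every subset of a discrete space is clopen, so $\scrX(A)$ is clopen in $\overline{\scrX}(A)\cong\overline{\scrX}(F)$, and therefore clopen in the subspace $\scrX(F)$.

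\emph{Part (b).} Given $x\in\scrX(A)$, choose an affine open $\Spec R\subseteq\scrX$ containing the image of the closed point of $x$; then $x$ factors through $\Spec R$, so $x\in(\Spec R)(A)$, which is open in $\scrX(A)$ by part (a). Hence it suffices to produce a neighborhood basis of $x$ lying inside $(\Spec R)(A)$. Writing $x=(a_1,\dots,a_N)\in A^N$ in coordinates on this chart, the sets $(\Spec R)(A)\cap\prod_{i=1}^{N}(a_i+\mm^n)$, for $n\geq 1$, are clopen — each product $\prod_{i=1}^{N}(a_i+\mm^n)$ is clopen in $F^N$ — and they form a neighborhood basis of $x$ in $(\Spec R)(A)$, since such products form a neighborhood basis of $x$ in $F^N$. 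It remains to identify these sets with the $C(x,n)$: if $x'\in\scrX(A)$ satisfies $x'=x$ in $\scrX(A/\mm^n)$, then in particular modulo $\mm$, so $x'$ has the same closed point as $x$ and therefore also factors through $\Spec R$, and then the equality in $\scrX(A/\mm^n)$ says precisely that the coordinate tuple of $x'$ is congruent to that of $x$ modulo $\mm^n$; the converse is immediate. Thus each $C(x,n)$ is clopen and the $C(x,n)$ form a neighborhood basis at $x$. Since $\scrX(A)$ is the disjoint union of the fibers of the reduction $\scrX(A)\to\scrX(A/\mm^n)$, and each such fiber is a set $C(x,n)$, the stated decomposition follows; and when $k$ is finite, $A/\mm^n$ is finite, so $\scrX(A/\mm^n)$ is finite (as $\scrX$ is of finite type over $A$), whence only finitely many fibers occur.
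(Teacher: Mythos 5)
Your proof is correct, and it is substantially more detailed than the paper's, which merely describes the $\mm$-adic topology via a finite affine cover and declares all assertions immediate. Your chart-level arguments — openness of $\scrX(A)$, compactness and the resulting closedness when $k$ is finite, and all of part (b) — are precisely the ``immediate'' verifications the paper has in mind. Where you genuinely diverge is the closedness of $\scrX(A)$ in $\scrX(F)$ when $k$ is infinite: you rightly observe that membership in $\scrX(A)$ is not a chart-local condition on $F$-points (an $A$-point whose closed point lands outside a given chart has non-integral coordinates in that chart), and you supply an actual argument via a compactification $\scrX\subseteq\overline{\scrX}$, the valuative criterion identifying $\overline{\scrX}(A)$ with $\overline{\scrX}(F)$, and local constancy of the reduction map to $\overline{\scrX}(k)$, so that $\scrX(A)$ is the preimage of $\scrX(k)$ and hence clopen. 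This buys the lemma in the full generality in which it is stated; in the paper's later applications the residue field is always finite, where closedness already follows from compactness plus Hausdorffness of $\scrX(F)$, as you note. Two small remarks: the ``homeomorphism'' verification for $\overline{\scrX}(A)\to\overline{\scrX}(F)$ is unnecessary, since once the valuative criterion shows the two sets coincide, $\overline{\scrX}(A)$ carries the subspace topology by definition (and a general Nagata compactification need not embed in $\PP^N$, though the quasi-projective case — the only one used later — does); and the closedness of $C(x,n)$ in all of $\scrX(A)$, rather than just in the chosen chart, should be drawn from the observation you make at the end anyway, namely that the sets $C(y,n)$ are open and partition $\scrX(A)$ as the fibers of reduction modulo $\mm^n$, so each is also closed.
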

			
		\begin{proof}
			Note that the $\mm$-adic topology on $\scrX(F)$ is defined as follows. Let $\{U_{\alpha}\}$ be an affine open cover of $\scrX$; assume that each $U_{\alpha}$ is of the form $\Spec(A[T_1,\ldots,T_m]/I_{\alpha})$.  Then $U_\alpha(F)$ is identified with the zero locus of $I_\alpha$ in $F^m$, and we equip it with the usual $\mm$-adic topology. This topology is independent of the choice of an affine covering and independent of the choice of presentation $\Spec(A[T_1,\ldots,T_m]/I_{\alpha})$ of an open in the covering. Since, by assumption, $\scrX$ is of finite type, we assume that the covering is finite. All assertions in the lemma follow immediately from the above description of the $\mm$-adic topology on $\scrX(F)$.
		\end{proof}
			
		Now we will prove a series of lemmas that give an explicit presentation of the local ring at points of $\scrX$ and $\scrV$ in terms of power series rings with coefficients in $A$.

		\begin{lemma}\label{lem:complete_local}
			Let $\scrX$ be a smooth quasi-projective scheme over $A$, let $P$ be an $A$-point of $\scrX$, and let $\bar{P}$ denote its reduction in $\scrX(k)$. Then the $\mm_{\bar{P}}$-adic completion $\hat{\calO}_{\scrX, \bar{P}}$ is isomorphic to $A[[T_1, \ldots, T_g]]$, where $g$ is the relative dimension of $\scrX$ at $P$ over $A$.  Moreover, a choice of an isomorphism 
			\[
				\tau\colon \hat{\calO}_{\scrX, \bar{P}}\stackrel{\sim}{\to}
				A[[T_1, \ldots, T_g]]
			\]
			gives the corresponding homeomorphism
			\begin{equation}\label{eq:homeo}
				h_{\tau}:\ \changeK{C(P,1)}\cong \mm^g.
			\end{equation}
		\end{lemma}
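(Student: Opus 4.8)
The plan is to reduce to the case of affine space over $A$ via the structure theory of smooth morphisms, and then to read off both the ring isomorphism and the homeomorphism from the functorial description of $A$-points lying over $\bar P$.

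\textbf{Step 1 (étale coordinates).} Since $\scrX\to\Spec A$ is smooth of relative dimension $g$ at $P$ and $\scrX$ is of finite type over $A$, there is an open neighbourhood $\scrU$ of $\bar P$ in $\scrX$ together with an \'etale $A$-morphism $p\colon \scrU\to\Aff^{g}_{A}=\Spec A[x_1,\dots,x_g]$ (see \cite{EGA4}). Composing $p$ with the translation automorphism of $\Aff^g_A$ by $-p(P)\in\Aff^g_A(A)=A^g$, we may assume that $p\circ P$ is the zero section $\Spec A\to\Aff^g_A$; in particular $p(\bar P)$ is the origin $\bar 0$ of the special fibre $\Aff^g_k$, which is $k$-rational, so $k(\bar P)=k=k(\bar 0)$.

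\textbf{Step 2 (the ring isomorphism).} An \'etale morphism inducing a trivial residue field extension at a point induces an isomorphism on $\mm$-adic completions of the local rings there; hence $p$ gives an isomorphism $\hat p^{\,*}\colon \hat\calO_{\Aff^g_A,\bar 0}\xrightarrow{\ \sim\ }\hat\calO_{\scrX,\bar P}$. The maximal ideal of $A[x_1,\dots,x_g]$ at $\bar 0$ is $\mm\cdot A[x_1,\dots,x_g]+(x_1,\dots,x_g)$, and because $A$ is $\mm$-adically complete its completion along that ideal is $A[[x_1,\dots,x_g]]$. Composing the inverse of $\hat p^{\,*}$ with this identification yields an isomorphism $\tau\colon \hat\calO_{\scrX,\bar P}\xrightarrow{\sim}A[[T_1,\dots,T_g]]$, where $T_i$ is the image of $x_i$; this proves the first assertion. (One could instead argue that flatness and smoothness of the special fibre make $\pi$ part of a regular system of parameters of the regular local ring $\calO_{\scrX,\bar P}$, and then invoke Cohen's structure theorem together with a surjection-of-complete-regular-local-rings argument; the \'etale chart is cleaner and is what we will reuse below.)

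\textbf{Step 3 (the homeomorphism).} Any $P'\in\scrX(A)$ with $\bar{P'}=\bar P$ factors through $\scrU$: its generic point cannot lie in the closed set $\scrX\setminus\scrU$, since $\bar P$ lies in its closure and $\bar P\in\scrU$. Hence $P'$ corresponds to a local $A$-algebra homomorphism $\calO_{\scrX,\bar P}\to A$, which, as $A$ is complete, extends uniquely to a continuous local $A$-algebra homomorphism $\hat\calO_{\scrX,\bar P}\to A$; conversely every such homomorphism arises this way. Transporting along $\tau$ identifies $C(P,1)$ with the set of continuous local $A$-algebra homomorphisms $A[[T_1,\dots,T_g]]\to A$, i.e.\ with $\mm^g$ via $\psi\mapsto(\psi(T_1),\dots,\psi(T_g))$; define $h_\tau$ to be this bijection. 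To see it is a homeomorphism, recall from Lemma~\ref{lem:topology} that $P'\equiv P''\pmod{\mm^n}$ (equivalently, $P',P''$ lie in a common $C(x,n)$) iff $P'$ and $P''$ define the same point of $\scrX(A/\mm^n)$. A point of $\scrX(A/\mm^n)$ whose image is the closed point $\bar P$ factors through $\Spec\calO_{\scrX,\bar P}$ and then, $A/\mm^n$ being Artinian, through $\Spec\hat\calO_{\scrX,\bar P}$; thus it is the same as a local homomorphism $\hat\calO_{\scrX,\bar P}\to A/\mm^n$. Therefore $P'\equiv P''\pmod{\mm^n}$ iff the associated homomorphisms $A[[T_1,\dots,T_g]]\to A/\mm^n$ agree iff $h_\tau(P')\equiv h_\tau(P'')\pmod{\mm^n}$. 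Since the topology on $\scrX(A)\subseteq\scrX(F)$ is intrinsic (independent of the chosen affine presentation, as noted in the proof of Lemma~\ref{lem:topology}), this shows $h_\tau$ and its inverse are continuous for the $\mm$-adic topologies, giving the claimed homeomorphism $h_\tau\colon C(P,1)\isom\mm^g$.

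\textbf{Expected main obstacle.} The ring isomorphism is essentially formal once Step 1 is available. The point requiring care is the compatibility in Step 3: one must check that the tautological topology on $C(P,1)$, defined via an arbitrary affine embedding, matches the one pulled back from $\mm^g$ through the chart $\tau$. This is precisely the statement that $\tau$ is compatible with every finite-level reduction $\scrX(A/\mm^n)$, which reduces to the scheme-theoretic fact that a morphism from a local Artinian scheme into $\scrX$ with prescribed (closed-point) image factors through $\Spec\hat\calO_{\scrX,\bar P}$.
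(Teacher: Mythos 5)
Your proof is correct, but the first assertion is established by a genuinely different route than the one in the paper. You obtain the isomorphism $\hat{\calO}_{\scrX,\bar P}\cong A[[T_1,\dots,T_g]]$ from the structure theorem for smooth morphisms: an \'etale chart $p\colon\scrU\to\Aff^g_A$ near $\bar P$, translated so that $p\circ P$ is the zero section, together with the fact that an \'etale map with trivial residue extension induces an isomorphism on completions and that the completion of $A[x_1,\dots,x_g]$ along $(\mm,x_1,\dots,x_g)$ is $A[[x_1,\dots,x_g]]$ (using that $A$ is complete). The paper instead argues purely by local algebra via Cohen structure theory, splitting into cases: in mixed characteristic it uses finiteness of $\hat{\calO}_{\scrX,\bar P}$ over $\mathcal{W}(k)[[T_1,\dots,T_g]]$ plus the section coming from $P$, and in equal characteristic it identifies $\hat{\calO}_{\scrX,\bar P}\cong k[[T_1,\dots,T_{g+1}]]$, $A\cong k[[T]]$, and uses the section to see that the image of $T$ is part of a regular system of parameters. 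Your chart-based argument buys uniformity (no case split on the characteristic, and no appeal to Witt vectors), at the cost of invoking the local structure of smooth morphisms from \cite{EGA4}; the paper's argument stays entirely inside complete local ring theory. For the second assertion your Step 3 coincides with the paper's identification of $C(P,1)$ with continuous local $A$-algebra homomorphisms $\hat{\calO}_{\scrX,\bar P}\to A$, sent to $(\psi(T_1),\dots,\psi(T_g))\in\mm^g$; in fact you go further than the paper by checking explicitly, via the finite-level reductions $\scrX(A/\mm^n)$ and the basis of neighborhoods from Lemma~\ref{lem:topology}, that this bijection and its inverse are continuous, a point the paper leaves implicit.
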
		
		\begin{proof}
			If $F$ is characteristic $0$ and $k$ is characteristic $p$, then the ring $\hat{\calO}_{\scrX,\bar{P}}$ is finite (as a module) over  $\mathcal{W}(k)[[T_1,\ldots,T_g]]$ where $\mathcal{W}(k)$ is the ring of Witt vectors over $k$~\cite[Ch.\ 10]{Mat86}. Since the point $P$ gives rise to a section of the structural morphism:
			\[
				A\longrightarrow\hat{\calO}_{\scrX,\bar{P}}
			\]
			we have $\hat{\calO}_{\scrX,\bar{P}}$ is isomorphic to $A[[T_1,\ldots,T_g]]$.

			If $F$ and $k$ have equal characteristic, we have $\hat{\calO}_{\scrX,\bar{P}}$ is isomorphic to $k[[T_1,\ldots,T_{g+1}]]$, and $A$ is isomorphic to $k[[T]]$. Again, the point $P$	makes sure that the image of $T$ in $\hat{\calO}_{\scrX,\bar{P}}$ lies in $\hat{\mm}-\hat{\fm}^{2}$ where $\hat{\fm}$ is the maximal ideal of $\hat{\calO}_{\scrX,\bar{P}}$. Thus the structural morphism makes $\hat{\calO}_{\scrX,\bar{P}}$ a power series in $g$ variables with coefficients	in $A$.

			Every point	$u$ in $C(P,1)$ is equivalent to an $A$-morphism:
			\[
				\tilde{u}:\ \Spec A\rightarrow
				\Spec\calO_{\scrX,\bar{P}},
			\]
			which is in turn equivalent to specifying an $A$-algebra	homomorphism:
			\[
				\tilde{u}^{\sharp}:\ \hat{\calO}_{\scrX,\bar{P}} \rightarrow A.
			\]
			Thus, for a fixed isomorphism $\tau\colon \hat{\calO}_{\scrX, \bar{P}}\stackrel{\sim}{\to}A[[T_1, \ldots, T_g]]$, we have that giving $\tilde{u}^{\sharp}$ is equivalent to giving an element:
			\[
				h_{\tau}(u) := 
				(\tilde{u}^{\sharp}(T_i))_{i = 1}^g\in\mm^g.
			\]
		\end{proof}

		\begin{lemma}\label{lem:reduced_completion}
			Retain the notation from Lemma~\ref{lem:complete_local}. Assume that $\scrV$ is a closed subscheme of $\scrX$ such that $\bar{P}\in \scrV(k)$; write $\scrI$ for the ideal sheaf of $\scrV$.  After fixing an isomorphism $\tau\colon \hat{\calO}_{\scrX, \bar{P}}\stackrel{\sim}{\to}A[[T_1, \ldots, T_g]]$ (which exists by Lemma~\ref{lem:complete_local}), we may view $\scrI_{\bar{P}}\hat{\calO}_{\scrX,\bar{P}}$ as an ideal of $A[[T_1,\ldots,T_g]]$.  Then the homeomorphism $h_\tau$ in~\eqref{eq:homeo} maps $\scrV(A)\cap C(P,1)$ onto 
			\[
				\{\beta\in\mm^{g}:\ f(\beta)=0\ \ 
				\forall f\in\scrI_{\bar{P}}
				\hat{\calO}_{\scrX,\bar{P}}\}.
			\] 
			Furthermore, the $\fm_{\bar{P}}$-adic completion 
			\[
				\hat{\calO}_{\scrV,\bar{P}}\cong 
				\hat{\calO}_{\scrX,\bar{P}}/
				\scrI_{\bar{P}}\hat{\calO}_{\scrX,\bar{P}}
			\]
			is reduced if $\scrV$ is reduced at $\bar{P}$.
		\end{lemma}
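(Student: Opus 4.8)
The plan is to treat the two assertions separately; the first is essentially bookkeeping with the identifications introduced in Lemma~\ref{lem:complete_local}, while the real content is in the second, which rests on an excellence input.

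For the first assertion, recall from the proof of Lemma~\ref{lem:complete_local} that a point $u\in C(P,1)$ is the same datum as an $A$-algebra homomorphism $\tilde u^{\sharp}\colon\hat{\calO}_{\scrX,\bar{P}}\to A$, and that under the isomorphism $\tau$ this homomorphism becomes the substitution map $A[[T_1,\ldots,T_g]]\to A$, $f\mapsto f(\beta)$, where $\beta=h_\tau(u)\in\mm^g$ (the substitution converges precisely because the entries of $\beta$ lie in $\mm$). Since $\bar u=\bar P$, the morphism $u\colon\Spec A\to\scrX$ factors through $\Spec\calO_{\scrX,\bar{P}}$, and $u$ lies in $\scrV(A)$ if and only if it factors through the closed subscheme $\scrV$, i.e.\ if and only if the local homomorphism $\calO_{\scrX,\bar{P}}\to A$ underlying $u$ kills the stalk $\scrI_{\bar{P}}$. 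As $A$ is $\mm$-adically complete, this homomorphism extends continuously to $\hat{\calO}_{\scrX,\bar{P}}$, and it kills $\scrI_{\bar{P}}$ if and only if it kills the ideal $\scrI_{\bar{P}}\hat{\calO}_{\scrX,\bar{P}}$ it generates. Reading this off through $\tau$ shows that $u\in\scrV(A)\cap C(P,1)$ exactly when $f(\beta)=0$ for all $f\in\scrI_{\bar{P}}\hat{\calO}_{\scrX,\bar{P}}$, which is the asserted description of $h_\tau\bigl(\scrV(A)\cap C(P,1)\bigr)$.

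For the second assertion I would first record the isomorphism. Because $\scrV$ is a closed subscheme of $\scrX$ with ideal sheaf $\scrI$, we have $\calO_{\scrV,\bar{P}}=\calO_{\scrX,\bar{P}}/\scrI_{\bar{P}}$; since $\calO_{\scrX,\bar{P}}$ and $\calO_{\scrV,\bar{P}}$ share the residue field $k(\bar P)$, the maximal-ideal-adic completion of $\calO_{\scrV,\bar{P}}$ is its $\mm_{\bar{P}}$-adic completion, and flatness of $\calO_{\scrX,\bar{P}}\to\hat{\calO}_{\scrX,\bar{P}}$ gives $\hat{\calO}_{\scrV,\bar{P}}=\calO_{\scrV,\bar{P}}\otimes_{\calO_{\scrX,\bar{P}}}\hat{\calO}_{\scrX,\bar{P}}=\hat{\calO}_{\scrX,\bar{P}}/\scrI_{\bar{P}}\hat{\calO}_{\scrX,\bar{P}}$.

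Finally, I must show that this ring is reduced when $R:=\calO_{\scrV,\bar{P}}$ is; this is the only non-formal step, since the completion of a reduced Noetherian local ring need not be reduced in general. The point is that we are in an excellent situation: $A$ is a complete Noetherian local ring, hence excellent, and $\scrV$ is of finite type over $A$, so $R$ is an excellent local ring, in particular a G-ring, meaning its formal fibers are geometrically regular. For such a ring reducedness ascends to the completion --- one verifies Serre's conditions $(R_0)$ and $(S_1)$ for $\hat R$ using faithful flatness of $R\to\hat R$ together with the regularity of the formal fibers --- and I would invoke this standard fact, citing \cite[\S32]{Mat86} (cf.\ \cite{EGA4}), to conclude. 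Thus the main obstacle is merely to locate and cite the correct form of the excellence statement; everything else reduces to formal manipulation of the setup in Lemma~\ref{lem:complete_local}.
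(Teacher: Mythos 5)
Your proposal is correct and follows essentially the same route as the paper: the first assertion by unwinding the identification of $C(P,1)$ with $A$-algebra homomorphisms $\hat{\calO}_{\scrX,\bar{P}}\to A$ (the paper simply calls this ``immediate''), the isomorphism $\hat{\calO}_{\scrV,\bar{P}}\cong \hat{\calO}_{\scrX,\bar{P}}/\scrI_{\bar{P}}\hat{\calO}_{\scrX,\bar{P}}$ by standard exactness/flatness of completion, and reducedness from excellence of the local ring $\calO_{\scrV,\bar{P}}$ (the paper cites \cite[Ch.\ 11]{Mat86} for exactly this). Your extra details --- the substitution-map description of $h_\tau$, and spelling out why $\calO_{\scrV,\bar{P}}$ is excellent via $A$ being complete --- are correct elaborations rather than a different argument.
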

		\begin{proof}
			The first assertion is immediate. The fact that 
			$\hat{\calO}_{\scrV,\bar{P}}\cong 
				\hat{\calO}_{\scrX,\bar{P}}/
				\scrI_{\bar{P}}\hat{\calO}_{\scrX,\bar{P}}$
			is a well-known property: taking completion preserves exact sequences of finitely generated modules over a Noetherian ring (see, for example~\cite[p.\ 108]{AM}). Since the local ring $\calO_{\scrV,\bar{P}}$ is reduced and excellent, its completion is reduced \cite[Ch.\ 11]{Mat86}.
		\end{proof}

		\begin{lemma}\label{lem:F_i}
			Retain the notation from Lemma~\ref{lem:complete_local} and fix an isomorphism $\tau\colon \hat{\calO}_{\scrX, \bar{P}}\stackrel{\sim}{\to}A[[T_1, \ldots, T_g]]$. Assume that $\varphi$ is an endomorphism of $\scrX$ such that $\overline{\varphi(P)}=\bar{P}$.  Then there are power series $f_1,\ldots,f_g$ in $A[[T_1,\ldots,T_g]]$ such that $f=(f_1,\ldots,f_g)$ fits into the following commutative diagram:
	       	\begin{equation}\label{diagram2}
	       		\xymatrix{
	       	       	C(P,1) \ar[r]^{\varphi} \ar[d]^{h_{\tau}} &
					C(P,1)\ar[d]^{h_{\tau}}\\
					\mm^g\ar[r]^{f} &
					\mm^g
				}
			\end{equation}
		\end{lemma}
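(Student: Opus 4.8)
The plan is to realize $f$ as the coordinate description, under the isomorphism $\tau$, of the endomorphism that $\varphi$ induces on the complete local ring $\hat{\calO}_{\scrX,\bar P}$, and then to verify commutativity of the square by directly unwinding the definition of $h_\tau$ given in Lemma~\ref{lem:complete_local}.

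First I would record two functoriality facts. Since $\overline{\varphi(P)}=\bar P$, the morphism $\varphi$ sends $\bar P$ to $\bar P$ and hence induces a local homomorphism of local $A$-algebras $\calO_{\scrX,\bar P}\to\calO_{\scrX,\bar P}$; passing to $\mm_{\bar P}$-adic completions gives a continuous local $A$-algebra endomorphism $\varphi^{\sharp}$ of $\hat{\calO}_{\scrX,\bar P}$. Similarly, a point $u\in C(P,1)\subseteq\scrX(A)$, regarded as an $A$-morphism $\Spec A\to\scrX$ whose closed point maps to $\bar P$, induces the continuous local $A$-algebra homomorphism $u^{\sharp}\colon\hat{\calO}_{\scrX,\bar P}\to A$ used to define $h_\tau(u)$; moreover $\overline{\varphi(P)}=\bar P$ also ensures $\varphi\bigl(C(P,1)\bigr)\subseteq C(P,1)$, so the top arrow of~\eqref{diagram2} is meaningful. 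Transporting $\varphi^{\sharp}$ across $\tau$ produces a continuous $A$-algebra endomorphism $\Phi:=\tau\circ\varphi^{\sharp}\circ\tau^{-1}$ of $A[[T_1,\ldots,T_g]]$, which is determined by the images $f_i:=\Phi(T_i)$. Because $\varphi^{\sharp}$ is local (and $\tau$ is automatically local, being an isomorphism of local rings), each $f_i$ lies in the maximal ideal $\mm_A A[[T]]+(T_1,\ldots,T_g)$; consequently, for $\beta\in\mm^g$ the series $f_i(\beta)$ converges $\mm$-adically (as $A$ is complete and the entries of $\beta$ lie in $\mm$) and lies in $\mm$, so $f=(f_1,\ldots,f_g)$ is a well-defined self-map of $\mm^g$.

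It then remains to check that~\eqref{diagram2} commutes. Put $t_i:=\tau^{-1}(T_i)$, so that $h_\tau(u)=(u^{\sharp}(t_1),\ldots,u^{\sharp}(t_g))$ by definition. For $u\in C(P,1)$, functoriality of stalks and completion gives $h_\tau(\varphi(u))=\bigl((\varphi\circ u)^{\sharp}(t_i)\bigr)_i=\bigl(u^{\sharp}(\varphi^{\sharp}(t_i))\bigr)_i$. On the other hand, $\varphi^{\sharp}(t_i)=\tau^{-1}(f_i)=f_i(t_1,\ldots,t_g)$, since $\tau^{-1}$ is the continuous $A$-algebra map sending $T_j\mapsto t_j$; applying the continuous $A$-algebra homomorphism $u^{\sharp}$ and using that it commutes with $\mm$-adically convergent substitution yields $u^{\sharp}(\varphi^{\sharp}(t_i))=f_i\bigl(u^{\sharp}(t_1),\ldots,u^{\sharp}(t_g)\bigr)$, which is exactly the $i$-th coordinate of $f(h_\tau(u))$. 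Hence $h_\tau\circ\varphi=f\circ h_\tau$ on $C(P,1)$, as required.

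I do not anticipate a real obstacle: the lemma merely records the elementary fact that an $A$-morphism of a residue disk is, in coordinates, given by substitution of power series with coefficients in $A$, and the hypotheses of Lemma~\ref{lem:complete_local} enter only through the existence of the coordinate isomorphism $\tau$ (\'etaleness of $\varphi$ is not needed here). The one point demanding care is the continuity bookkeeping --- checking that $\varphi^{\sharp}$ and $u^{\sharp}$ are local, so that the relevant series converge into $\mm$, and that $u^{\sharp}$ genuinely commutes with the infinite substitution $f_i\mapsto f_i(t_1,\ldots,t_g)$ --- which is where I would be most careful in writing out a complete proof.
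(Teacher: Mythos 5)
Your proof is correct and is essentially the paper's argument: the paper likewise defines $f_i=\widehat{\varphi^{*}}T_i$ under the identification $\tau$ and declares the commutativity of~\eqref{diagram2} immediate. Your write-up simply makes explicit the bookkeeping (locality of $\varphi^{\sharp}$ and $u^{\sharp}$, compatibility of $u^{\sharp}$ with convergent substitution) that the paper leaves to the reader.
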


		\begin{proof}
			Using $\tau$ to identify $\hat{\calO}_{\scrX,\bar{P}}$ with $A[[T_1,\ldots,T_g]]$, we define:
			\[
				f_i=\widehat{\varphi^{*}}T_i\ \ \forall 1\leq i\leq g,
			\]
			where $\widehat{\varphi^{*}}:\ \hat{\calO}_{\scrX,\bar{P}}\rightarrow\hat{\calO}_{\scrX,\bar{P}}$.  Then the commutativity is immediate.
		\end{proof}

		We will also use the following easy lemma:
		\begin{lemma}\label{lem:map_I_to_I}
			Let $R=A[[T_1,\ldots,T_g]]$, and $\varphi\colon R\rightarrow R$ a ring isomorphism. If $I$ is a radical ideal of $R$ such that $\varphi(I)\subseteq I$, then $\varphi(I)=I$.
		\end{lemma}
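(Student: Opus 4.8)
The plan is to convert the one-sided containment $\varphi(I)\subseteq I$ into an \emph{ascending} chain of ideals in $R = A[[T_1,\ldots,T_g]]$ and then apply the Noetherian property; the radicality of $I$ will in fact not be needed.

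First I would record the elementary facts that, since $\varphi\colon R\to R$ is a ring isomorphism, it is bijective, $\varphi^{-1}\colon R\to R$ is again a ring homomorphism, $\varphi^{-1}(\varphi(J)) = J$ for every ideal $J$, and taking preimages under $\varphi^{-1}$ preserves inclusions of ideals. Applying $\varphi^{-1}$ to $\varphi(I)\subseteq I$ then gives $I\subseteq \varphi^{-1}(I)$. Iterating this (i.e.\ applying $\varphi^{-1}$ repeatedly) produces the ascending chain
\[
I \subseteq \varphi^{-1}(I) \subseteq \varphi^{-2}(I) \subseteq \cdots,
\]
where $\varphi^{-k}$ denotes the $k$-fold composite of $\varphi^{-1}$ with itself.

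Next I would invoke that $A$, being a complete discrete valuation ring, is Noetherian, and hence so is the power series ring $R = A[[T_1,\ldots,T_g]]$. Therefore the chain above stabilizes: $\varphi^{-n}(I) = \varphi^{-(n+1)}(I)$ for some $n\geq 0$. Applying the bijective map $\varphi^{n}$ to both sides gives $I = \varphi^{-1}(I)$, and applying $\varphi$ once more gives $\varphi(I) = I$, as claimed. (Geometrically, $\varphi$ induces a homeomorphism of $\Spec R$ carrying the closed set $V(I)$ into itself, and the content of the lemma is that it must then carry $V(I)$ onto itself; one could alternatively phrase the argument in terms of the finitely many irreducible components of $V(I)$ together with the fact that an automorphism preserves their dimensions, but the ascending-chain argument is shorter and self-contained.)

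Since this is an elementary lemma, there is no genuine obstacle. The one point that requires care is the \emph{direction} of the inclusions: one must apply $\varphi^{-1}$, not $\varphi$, to $\varphi(I)\subseteq I$ so as to obtain an \emph{ascending} chain — the ``obvious'' descending chain $I\supseteq\varphi(I)\supseteq\varphi^{2}(I)\supseteq\cdots$ need not stabilize in a Noetherian ring and carries no information.
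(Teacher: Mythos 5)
Your proof is correct, and it takes a genuinely different route from the paper's. You convert $\varphi(I)\subseteq I$ into the ascending chain $I\subseteq\varphi^{-1}(I)\subseteq\varphi^{-2}(I)\subseteq\cdots$ of preimage ideals and invoke the ascending chain condition in the Noetherian ring $A[[T_1,\ldots,T_g]]$, then transport the stabilization back through the bijection $\varphi$; each step (preimages of ideals are ideals, $\varphi^{-1}(\varphi(J))=J$, monotonicity of preimages, Noetherianity of a power series ring over the Noetherian ring $A$) is sound, and your caution about using $\varphi^{-1}$ rather than the non-stabilizing descending chain $I\supseteq\varphi(I)\supseteq\cdots$ is exactly the right point of care. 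The paper instead uses the radical hypothesis to write $I$ as the intersection of its finitely many minimal primes and then shows, by a backwards induction on $\dim R/P_j$ together with a counting argument, that $\varphi$ permutes the minimal primes in each dimension; this is why the paper assumes $I$ radical and remarks that the argument needs $R$ Noetherian, equidimensional and catenary. Your argument is shorter, needs neither the radical hypothesis nor any dimension theory, and proves the stronger statement that any ideal of any Noetherian ring stable under a ring automorphism is fixed by it — which more than suffices for the application in the proof of Theorem~\ref{thm:adic_closure}, where the lemma is applied to $\scrI_{\bar P}\hat{\calO}_{\scrX,\bar P}$ and the isomorphism $\widehat{\varphi^{*}}$. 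What the paper's longer proof buys is the finer structural information that $\varphi$ permutes the minimal primes of $I$ dimension by dimension, but that extra information is not used elsewhere in the paper.
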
 
		\begin{proof}
			Although the lemma is stated for a particular ring $R$, it will be clear from the proof that we require only that $R$ is a Noetherian equidimensional catenary ring (see~\cite[pp. 452, 453]{Eisenbud-CA} for the definition). Let $P_1,\ldots,P_n$ be the minimal primes containing $I$; then $\displaystyle I=\bigcap_{j=1}^{n}P_j$. Let $\displaystyle D=\max_{1\leq j\leq n}\dim R/P_j$. We will prove that for any integer $d$, $\varphi$ fixes the set 
			\[
				\calS_d:=\{P_j:\ \dim R/P_j=d\};
			\]
			the proof uses backwards induction on $0\leq d\leq D$. From this statement, the lemma follows immediately.

			Since, for each $1 \leq j\leq n$
			\[
				\bigcap_{i=1}^{n} \varphi(P_i)=\varphi(I)
				\subseteq I\subseteq P_j,
			\]
			there exists some $i$ (depending on $j$) such that $\varphi(P_i)\subseteq P_j$.  Let $P$ be an arbitrary prime in $\calS_D$. Then there is some $P_i$ such that	$\varphi(P_i)\subseteq P$. We have:
			\[
				D = \dim R/P \leq \dim R/\varphi(P_i) = \dim R/P_i.
			\]
			Since $D$ is the maximal dimension, we must have equality and so $\varphi(P_i)=P$. Then a counting argument shows that $\varphi$ fixes $S_D$.

			Now fix $1\leq D'< D$ and assume that $\varphi$ fixes $S_d$ for all $D'< d\leq D$. Let $P$ be an arbitrary prime in $S_{D'}$, and let $1\leq i\leq n$ be such that $\varphi(P_i)\subseteq P$. If $d:=\dim R/P_i$ is strictly greater that $D'$, then, by the induction hypothesis, $\varphi(P_i) = P_j$ for some $j$. Since $P_j$ and $P$ are two distinct minimal primes containing $I$ and $P_j \subseteq P$, this yields a contradiction. Thus, we have
			\[
				D' = \dim R/P \leq \dim R/\varphi(P_i)=
				\dim R/P_i = d \leq D',
			\]
			and so $\varphi(P_i) = P$.  Again, a counting argument shows that $\varphi$ fixes $S_{D'}$, as desired. 
		\end{proof}

		Now we are equipped to prove Theorem~\ref{thm:adic_closure}.				
		\begin{proof}[Proof of Theorem~\ref{thm:adic_closure}]
			Assume that $\scrV\cap \calO_{\varphi}(P) = \emptyset$.  We first perform a few reductions.  By assumption, $\varphi^{M}(\scrV)\subseteq \scrV$ for some positive integer $M$; after replacing the data $(\varphi,P)$ with $(\varphi^{M},\varphi^{i}(P))$ for each $0\leq i<M$, we may assume that $M = 1$. We have also assumed that $P$ is $\varphi$-preperiodic modulo $\mm$, i.e., that $\varphi^{N_0 + N}(P) \equiv \varphi^{N_0}(P) \pmod{\mm}$.  Since $\scrV\cap \calO_{\varphi}(P) = \emptyset$, the $\mm$-adic closure of $\calO_{\varphi}(P)$ intersects $\scrV$ if and only if $\mm$-adic closure of $\calO_{\varphi}(\varphi^{N_0}(P))$ intersects $\scrV$.  Therefore, by replacing $P$ with $\varphi^{N_0}(P)$, we may assume that $\bar{P}$ is $\varphi$-periodic of period $N$. For each $0\leq i\leq N-1$, by replacing the data $(\varphi,P)$ with the data $(\varphi^N,\varphi^{i}(P))$, we may assume that $\varphi$ fixes $\bar{P}$. By Lemma~\ref{lem:complete_local}, we may identify $\hat{\calO}_{\scrX,\bar{P}}$ and $C(P, 1)$ with $A[[T_1,\ldots,T_g]]$ and $\mm^g$, respectively. By Lemma~\ref{lem:F_i}, there exist power series $f=(f_1,\ldots,f_g)$ such that $\varphi$ acts by $f$ in the chart $\mm^g$. 

			If $\bar{P}\notin\scrV(k)$, then $C(P):=C(P,1)$ contains the $\mm$-adic closure of $\calO_{\varphi}(P)$ and is disjoint from $\scrV(A)$, so	we are done. Now suppose that $\bar{P}\in\scrV(k)$. As in Lemma~\ref{lem:reduced_completion}, let $\scrI$ denote the ideal sheaf of $\scrV$. Let $H_1,\ldots,H_m$ denote a choice of generators of the ideal $\scrI_{\bar{P}}\hat{\calO}_{\scrX,\bar{P}}$ of $A[[T_1,\ldots,T_g]]$. Because $\varphi(\scrV)\subseteq\scrV$, we have $\varphi^{*}(\scrI_{\bar{P}})\subseteq\scrI_{\bar{P}}$ where	$\varphi^{*}:\ \calO_{\scrX,\bar{P}}\rightarrow\calO_{\scrX,\bar{P}}$ is the induced map on the stalk at $\bar{P}$.	Therefore $\widehat{\varphi^{*}}(\scrI_{\bar{P}})\hat{\calO}_{\scrX,\bar{P}}	\subseteq\scrI_{\bar{P}}\hat{\calO}_{\scrX,\bar{P}}$. By Lemma~\ref{lem:reduced_completion}, $\scrI_{\bar{P}}\hat{\calO}_{\scrX,\bar{P}}$ is a radical ideal of $\hat{\calO}_{\scrX,\bar{P}}$, and, since $\varphi$ is \'etale, $\widehat{\varphi^{*}}:\ \hat{\calO}_{\scrX,\bar{P}}\rightarrow \hat{\calO}_{\scrX,\bar{P}}$ is a ring isomorphism.  Thus, we may apply Lemma~\ref{lem:map_I_to_I} to conclude that $\widehat{\varphi^{*}}(\scrI_{\bar{P}})\hat{\calO}_{\scrX,\bar{P}} = \scrI_{\bar{P}}\hat{\calO}_{\scrX,\bar{P}}$.
			Therefore, there exist power series $c_{ij}\in A[[T_1,\ldots,T_g]]$, where $1\leq i,j\leq m$, such that
			\begin{equation}\label{eq:main_eq}
				H_i = \sum_{j=1}^{m}c_{ij}H_j(f_1,\ldots,f_g)
				\quad \forall 1\leq i\leq m.
			\end{equation}

			For a vector $v=(v_i)$ in $(F)^m$, define:
			\[
				||v||=\max_{1\leq i\leq m}|v_i|,
			\]
			and	for $u\in \mm^g$, write $H(u) = (H_i(u))\in A^m$. By~\eqref{eq:main_eq}, we now have:
			\begin{equation}\label{eq:main_ineq}
				 ||H(f(u))||\geq ||H(u)|| \quad \forall u=(u_i)\in \mm^g.
			\end{equation}
			Thus, the closure of $\calO_{\varphi}({P})$ is contained in 
			\[
				\{u\in\mm^g:\ ||H(u)||\geq ||H(P)||\}.
			\]
			Since $||H(P)|| > 0$ and $H$ vanishes on $\scrV(A)$, this completes the proof.
		\end{proof}

		\begin{rema}
	        If $\varphi(\scrV)\subseteq\scrV$, then we can prove a stronger statement than~\eqref{eq:main_ineq}, namely:
			\begin{equation}\label{eq:temp_eq}
			||H(F(u))||=||H(u)||. 
			\end{equation}
			The proof proceeds in the same way as the proof of~\eqref{eq:main_ineq}. More explicitly, we may write each $H_i(f_1,\ldots,f_g)$ as a linear combination of the $H_j$'s and obtain $||H(f(u))||\leq ||H(u)||$. 
			
			Morally speaking, if we consider $||H(P)||$ as the ``distance'' from $P$ to $\scrV$ then equality~\eqref{eq:temp_eq} means that when $P$ comes close to $\scrV$ (i.e. $\bar{P}\in\scrV(k)$) and $\varphi$ fixes $P$ modulo $\mm$, then all elements in $\OO_{\varphi}(P)$ are equidistant to $\scrV$. Thus $P$ becomes a ``satellite'' of $\scrV$ by moving around $\scrV$ under $\varphi$.
		\end{rema}
		
		If $F$ is a finite extension of $\Q_p$ for some prime $p$, then there is another proof of Theorem~\ref{thm:adic_closure} using a uniformization of $\cO_{\varphi}(P)$ defined as follows.

		\begin{definition}\label{unif}
			Retain the notation from Theorem~\ref{thm:adic_closure}.  Assume that $F$ is a finite extension of $\Q_p$ and that $\varphi$ fixes $P$ modulo $\mm$. Using Lemma~\ref{lem:complete_local}, identify $\hat{\calO}_{\scrX,\bar{P}}$ with $A[[T_1,\ldots,T_g]]$, and $\changeK{C(P):= C(P,1)}$ with $\mm^g$.  The orbit $\calO_{\varphi}(P)$ has a \defi{uniformization} if there exist power series $G_1,\ldots,G_g$ in $F[[T]]$, convergent on $\Z_p$
such that:
			\begin{itemize}
				\item[(i)] $ (G_1(0),\ldots,G_g(0))= P$, which equals $0$ in $\mm^g$, and
				\item[(ii)]
				 $\varphi(G_1(z),\ldots,G_g(z))= (G_1(z+1),\ldots,G_g(z+1))\ \forall z\in \Z_p$.
			\end{itemize}
		\end{definition}
		\begin{proof}[Proof of Theorem~\ref{thm:adic_closure} using {uniformization}]
			This proof is similar to arguments developed in~\cite{BGT_AJM}.	
		
			As above, we reduce to the case that $M = 1$ (i.e. $\scrV$ is invariant) and assume that $\bar{P}\in\scrV(k)$. By \cite[Theorem 3.3]{BGT_AJM} or
			\cite[Theorem 7]{A}, there is a uniformization of $\scrO_{\varphi}(P)$ (possibly after replacing
			$\varphi$ by an iterate).
			  Let $G = (G_1, \ldots, G_g)$ be such a uniformization.  By Definition \ref{unif}, the $\mm$-adic closure of $\calO_{\varphi}(P)$ is contained in $G(\Z_p)$. If there is some $u\in \Z_p$ such that $G(u)\in\scrV(A)$, then $G(u+n)\in\scrV(A)$ for every natural number $n$.  Let $H=0$ be any of
			the equations defining $\scrV$ in $\scrX$. Then we have
			$H\circ G(u+n)=0$ for 
		        every natural number $n$.
			Since a nonzero $p$-adic analytic function on $\Z_p$ can have only finitely many zeros, the analytic function $H\circ G$ must be identically zero on $\Z_p$. Therefore $G(\Z_p)\subseteq\scrV(A)$
and so the whole orbit of $P$ is contained in $\scrV(A)$, contradicting our assumption that
$\cO_{\varphi}(P)\cap \scrV(A)=\emptyset$.  
		\end{proof}

		While the argument using uniformization is simpler and might be applicable to some cases where $\varphi$ is not \'etale, our approach still has a number of advantages.
			\begin{itemize}
				 \item The proof of Theorem~\ref{thm:adic_closure} that does not require on uniformization can be translated into a simple and effective algorithm, hence	is suitable for computational purposes. 

				 \item Lemma~\ref{lem:map_I_to_I} and results of the same
				 type may be of independent interest. Indeed, results of a similar spirit are studied in a recent preprint of Bell and Lagarias~\cite{BL}.
				
				 \item Our proof requires no assumption on the characteristic of $F$, whereas uniformization of orbits does not hold if $\Char(F)>0$ (for example, it is impossible to have an ``exponential function'' since dividing by $n!$ is prohibited).
			\end{itemize}

	\subsection{Proof of Theorem~\ref{thm:DynamicalHasse} }
	\label{subsec:global}
		In this section, we present the proof of Theorem~\ref{thm:DynamicalHasse}.  First we show that for all but finitely many places $v$, the assumptions of Theorem~\ref{thm:adic_closure} hold.
		\begin{lemma}\label{lem:has_model}
			Assume that $\varphi$ is \'etale and that $\varphi(V)\subseteq V$.  Fix a point $P\in X(K)$, then there exists a finite set $S\subseteq M_K$ containing all the archimedean places such that $X, V,\varphi$ and $P$ extend to models $\scrX, \scrV, \tilde\varphi$ and $\mathscr{P}$, respectively, over $\OO_{K, S}$ with the following properties:
			\begin{itemize}
				\item $\tilde\varphi$ is \'etale, 
				\item $\scrX$ is quasi-projective and smooth, and
				\item $\scrV$ is a reduced, closed subscheme of $\scrX$, is flat over $\OO_{K,S}$, and $\tilde\varphi(\scrV)\subseteq \scrV.$
			\end{itemize}
		\end{lemma}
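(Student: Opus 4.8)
The plan is to use standard spreading-out (limit) arguments for schemes of finite type, as in \cite[\S8]{EGA4}. First I would fix an arbitrary finite set $S_0\subseteq M_K$ containing the archimedean places and choose \emph{some} models $\scrX_0\to\Spec\OO_{K,S_0}$, $\scrV_0\subseteq\scrX_0$, $\tilde\varphi_0\colon\scrX_0\to\scrX_0$ and $\mathscr{P}_0\in\scrX_0(\OO_{K,S_0})$ of $X,V,\varphi$ and $P$; such models exist because $X,V$ are of finite type over $K$ and $K=\varinjlim \OO_{K,S}$ (the colimit over finite $S$). Any two models agree after enlarging $S$, so it suffices to show that each of the listed properties holds after a finite enlargement of $S_0$, and then take the union of the finitely many exceptional sets.

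Next I would enlarge $S$ to impose the individual properties, one at a time, each time invoking the relevant permanence/openness result. \emph{Quasi-projectivity}: a quasi-projective immersion $X\hookrightarrow\PP^N_K$ spreads out to a quasi-projective immersion $\scrX\hookrightarrow\PP^N_{\OO_{K,S}}$ after enlarging $S$ (\cite[8.10.5]{EGA4}). \emph{Smoothness and \'etaleness}: the locus in $\Spec\OO_{K,S_0}$ over which $\scrX_0$ is smooth (resp. over which $\tilde\varphi_0$ is \'etale) is open \cite[17.7.8, 17.9.1]{EGA4}; since it contains the generic point (as $X$ is smooth over $K$ — which I should note follows from $\varphi$ being \'etale, forcing $X$ to be regular, and $K$ being replaced by a perfect-enough field, or simply add smoothness of $X$ to the hypotheses as it is implicit in the global theorems via Lemma~\ref{lem:complete_local}), it is the complement of finitely many closed points, so enlarging $S$ by these makes $\scrX$ smooth and $\tilde\varphi$ \'etale over $\OO_{K,S}$. \emph{Reducedness of $\scrV$}: $\scrV_0$ is reduced at the generic fiber, and the non-reduced locus is closed and misses the generic point, so again finitely many primes suffice (alternatively, replace $\scrV$ by its schematic closure of $V$ in $\scrX$, which is automatically reduced and flat over $\OO_{K,S}$). \emph{Flatness}: the schematic image of the flat $K$-scheme $V$ in $\scrX$ over $\OO_{K,S}$ is flat (torsion-free over a Dedekind domain), or one invokes generic flatness \cite[6.9.1]{EGA4}. \emph{The inclusion $\tilde\varphi(\scrV)\subseteq\scrV$}: the two closed subschemes $\tilde\varphi_0^{-1}(\scrV_0)$ and $\scrV_0\cap\tilde\varphi_0^{-1}(\scrV_0)$ of $\scrV_0$ agree on the generic fiber (since $\varphi(V)\subseteq V$), hence agree over a dense open of $\Spec\OO_{K,S_0}$ by \cite[8.10.5]{EGA4} applied to the equality of closed immersions; enlarging $S$ by the finitely many missing primes gives $\tilde\varphi(\scrV)\subseteq\scrV$.

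Finally I would take $S$ to be $S_0$ together with all the finitely many bad primes produced above; over this $S$ all the asserted properties hold simultaneously, and $\scrX,\scrV,\tilde\varphi,\mathscr{P}$ are the required models (any further enlargement of $S$ preserves the properties by base change). The only mildly delicate point is the hypothesis that the generic fiber is smooth: the theorems as stated assume $X$ quasi-projective and $\varphi$ \'etale but do not literally say $X$ is smooth, yet Theorem~\ref{thm:adic_closure} requires $\scrX$ smooth over $A$; I expect the intended reading is that \'etaleness of $\varphi$ (together with $K$ perfect, or after a harmless base change) forces $X$ regular hence smooth over $K$, and this is where I would be most careful. Everything else is a routine, if somewhat tedious, application of the finite-presentation descent results of EGA IV \S8, so the ``main obstacle'' is really just bookkeeping — making sure each property is stable under the base changes used to impose the subsequent ones, which it is because each is preserved by arbitrary base change along $\OO_{K,S}\to\OO_{K,S'}$.
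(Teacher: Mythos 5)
Your proposal is correct and follows essentially the same spreading-out argument as the paper: pick arbitrary models, enlarge $S$ using closedness of the non-smooth/non-flat/non-\'etale loci, deduce reducedness of $\scrV$ from flatness together with reducedness of its generic fiber, and obtain $\tilde\varphi(\scrV)\subseteq\scrV$ from the generic-fiber inclusion (the paper does this last step by arranging that every irreducible component of $\scrV$ meets the generic fiber and using density of $V$ in $\scrV$, a minor variant of your EGA IV \S 8 comparison). One small caveat: \'etaleness of $\varphi$ does not by itself force $X$ to be smooth (the identity map is \'etale on any variety), so smoothness of the generic fiber is an implicit hypothesis here—one the paper also tacitly assumes, since the singular case of Theorem~\ref{thm:DynamicalHasse} is treated separately by Noetherian induction on the smooth locus.
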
 
		\begin{proof}
			We first find models for $X$, $V$, $\varphi$ and $P$ over, say, some $\calO_{K,S}$. As the locus in $\Spec(\calO_{K,S})$ over which $\scrX$ is not smooth, $\scrV$ is not flat, or $\tilde{\varphi}$ is not \'etale is closed, by enlarging $S$, we may assume that this locus is empty. Since	$\scrV$ is flat over $\calO_{K,S}$ and its generic fiber is reduced, $\scrV$ is itself reduced.
			 
			It remains to ensure that $\tilde{\varphi}(\scrV)\subseteq\scrV$.  By enlarging $S$ again, we may assume that every irreducible component of $\scrV$ contains some point in the generic fiber. Then since $\varphi(V)\subseteq V$ and $V$ is dense in $\scrV$, we have $\tilde{\varphi}(\scrV)\subseteq \scrV$.   
		\end{proof}
		\begin{remark}
			The proof of Theorem~\ref{thm:DynamicalHasse} is simpler in the case when $X$ is smooth.  For clarity, we present the proof under this additional assumption first, and then give the general case.
		\end{remark}
		\begin{proof}[Proof of Theorem~\ref{thm:DynamicalHasse} under the assumption that $X$ is smooth]
			Fix $S\subseteq M_K$ as in Lemma~\ref{lem:has_model}. For $v\notin S$, let $\calO_{(v)} = K\cap\calO_v$ be the valuation ring of $v$ in $K$. By Lemma~\ref{lem:has_model}, there exist models $\scrX$ and $\scrV$ of $X$ and $V$, respectively, over $\calO_{(v)}$.  We take the formally smooth base change
			from $\calO_{(v)}$ to $\calO_v$, and abuse notation by (still) using $\scrV$ and $\scrX$
			to denote the resulting models over $\calO_{v}$.
						
			Now we apply Theorem~\ref{thm:adic_closure} to show that $\scrV(\calO_v)$ does not intersect the closure of $\calO_{\varphi}(P)$ in $\scrX(\calO_v)$.  Since $\scrX(\calO_v)$ is closed in $\scrX(K_v) = X(K_v)$, $V(K_v)=\scrV(K_v)$ does not intersect the closure of $\calO_{\varphi}(P)$ in $X(K_v)$. This completes the proof in the case when $X$ is smooth.
		\end{proof}

		\begin{proof}[Proof of Theorem~\ref{thm:DynamicalHasse} in the general case]
			Let $M>0$ be such that $\varphi^M(V)\subseteq V$. As in the proof of Theorem~\ref{thm:adic_closure}, by replacing the data $(\varphi,P)$ with the data 	$(\varphi^{M},\varphi^i(P))$ for $0\leq i<M-1$, we may assume that $M = 1$. We prove the theorem by Noetherian induction. 
			
			If $\dim(X) = 0$, there is nothing to prove. Assume that $\dim(X) > 0$; let $X'$ be the smooth locus of $X$, and let $X''= X - X'$ with the reduced closed subscheme structure. Since $\varphi$ is \'etale, $\varphi$ preserves both $X'$ and $X''$. If $P\in X''(K)$ then we use the induction hypothesis on $X''$. 
			
			Now suppose that $P\in X'(K)$. Write $V'=V\cap X'$. By Lemma~\ref{lem:has_model}, there exists a finite set of places $S$ such that $X$ and $V$, respectively, have flat (hence reduced) models $\scrX$ and $\scrV$ over $\calO_{K,S}$.  In addition, $\varphi$ and $P$ both extend to $\OO_{K,S}$ (by an abuse of notation, we denote these extensions by $\varphi$ and $P$, respectively), and $\varphi(\scrV)\subseteq\scrV$.  Since $\varphi$ is \'etale at $P$ generically, by enlarging $S$, we may assume that $\varphi$ is \'etale at every point in $P$. 
			
			Let $\scrX'$ denote the open subscheme of $\scrX$ consisting of smooth points over $\calO_{K,S}$, and let $\scrV'=\scrV\cap\scrX'$ be a model of $V'$ over $\calO_{K,S}$. Now let $v$ be a place outside $S$ and pull back all models to $\calO_v$. By Theorem~\ref{thm:adic_closure}, $\scrV'(\calO_v)$ does not intersect the $v$-adic closure of $\calO_{\varphi}(P)$ in $\scrX'(\calO_v)$. Since, by Lemma~\ref{lem:topology}, $\changeK{\scrX'(\calO_v)}$ is quasi-compact and $X(K_v)$ is Hausdorff, the set $\scrX'(\calO_v)$ is closed in $X(K_v)$. Therefore $V(K_v)$ does not intersect the $v$-adic closure of $\calO_{\varphi}(P)$ in $X(K_v)$.  This completes the proof.
		\end{proof}

	\subsection{Proof of Theorem~\ref{thm:new_global}}
	\label{subsec:preinvariant}
		We proceed by induction on the dimension of $V$. 
		The case $\dim(V)=0$ is easy, as 
		follows. Let $U$ be the union of all iterates of $V$. If $\cO_{\varphi}(P)$
		intersects $U$ then $P$ is preperiodic and there is nothing to prove.
		Otherwise, we apply Theorem \ref{thm:DynamicalHasse} for $U$
		instead of $V$. 
		To carry out the induction step, we need the following lemma which might 
		be of independent interest.
		\begin{lemma}\label{lem:YcapY_1}
		Let $X$ and $\varphi$ be as in Theorem \ref{thm:new_global}. 
		Let $Y$ be an irreducible $\varphi$-preperiodic subvariety of $X$.
		Let $Y_1$ be a periodic iterate of $Y$. Then every irreducible component
		of $Y\cap Y_1$ is preperiodic.
		\end{lemma}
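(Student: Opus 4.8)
The plan is to prove that, for each irreducible component $Z$ of $Y\cap Y_1$, the forward orbit $\{\varphi^n(Z)\}_{n\ge0}$ is finite; this is exactly the preperiodicity of $Z$. I would begin with the formal consequences of the hypotheses: $\varphi$ is quasi-finite (being \'etale), so $\dim\varphi(A)=\dim A$ for every closed $A\subseteq X$, and $\varphi$ is closed, so $\varphi(A)$ is closed; hence for $A$ irreducible $\varphi(A)$ is irreducible closed of the same dimension, the orbit of a preperiodic subvariety is a finite set of such subvarieties, and the union $U_1$ of the iterates of the periodic $Y_1$ is a closed subvariety with $\varphi(U_1)=U_1$. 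Since $Z\subseteq Y\cap Y_1\subseteq Y_1$ is irreducible and $Y_1$ is periodic, say $\varphi^{p}(Y_1)=Y_1$, the full orbit of $Z$ is finite as soon as the subsequence $\{\varphi^{pn}(Z)\}_n$ is finite, and this subsequence lies entirely inside $Y_1$; so the problem reduces to showing that $Z$ is preperiodic for the surjective self-map $\psi:=\varphi^{p}|_{Y_1}\colon Y_1\to Y_1$.

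For the reduced problem I would run a ``finite configuration'' argument. Writing $W_i:=\varphi^i(Y)$, the sequence $(W_i)$ is eventually periodic (as $Y$ is preperiodic) and $Y_1=W_j$ for a fixed $j$, so $\psi^{n}(Z)=\varphi^{pn}(Z)\subseteq\varphi^{pn}(Y)\cap\varphi^{pn}(Y_1)=W_{pn}\cap W_{pn+j}$, and this set of closed subvarieties takes only finitely many values. Hence the set $\mathcal C$ of all irreducible components of the sets $W_{pn}\cap W_{pn+j}$, $n\ge0$, is finite, and it would suffice to know that $\psi^{n}(Z)$ is always an irreducible \emph{component} of $W_{pn}\cap W_{pn+j}$, for then $\psi^n(Z)\in\mathcal C$ and finiteness of $\mathcal C$ forces the orbit to cycle. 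The difficulty is that $\varphi$ is not injective, so $\varphi(A)\cap\varphi(B)$ may be strictly larger than $\varphi(A\cap B)$ and $\psi^n(Z)$ could slip into a larger component; this is where the remaining hypotheses must be used --- \'etaleness (which gives that an irreducible $A$ is always an irreducible component of $\varphi^{-1}(\varphi(A))$, and that on a normal ambient the components of such a preimage are pairwise disjoint) and the hypothesis that the second factor is the \emph{iterate} $W_{pn+j}=\varphi^{j}(W_{pn})$ of the first (which ties the two preimage structures together via $\varphi^{j}$, ruling out the accidental intersection one sees for, e.g., $z\mapsto z^2$ on $\mathbb G_m^2$ when the two subvarieties are unrelated). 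To make normality available I would pass to the normalization $\nu\colon\widetilde{Y_1}\to Y_1$: because $\varphi$ is \'etale on all of $X$, the dominant morphism $\psi$ lifts to a finite \'etale self-map $\widetilde\psi$ of the normal variety $\widetilde{Y_1}$ (normalize the \'etale cover $\varphi^{p}\colon(\varphi^{p})^{-1}(Y_1)\to Y_1$; its normalization is again finite \'etale and $\widetilde{Y_1}$ sits inside it as a clopen piece), carry out the configuration argument for $\widetilde\psi$ and the transform $\widetilde Z$ of $Z$ upstairs, and push the resulting finiteness down along $\nu$.

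The step I expect to be the main obstacle is precisely the component-propagation statement --- showing that $\widetilde\psi^{n}(\widetilde Z)$ is, and remains, an irreducible component of the relevant (finitely many) intersection configurations on $\widetilde{Y_1}$. Carrying this out seems to require: (i) transferring the maximality property ``$Z$ is a component of $Y\cap Y_1$'' from $X$ to a maximality property on $\widetilde{Y_1}$; (ii) exploiting, at the generic point of a putative larger subvariety, that the \'etale map $\widehat{\psi^{*}}$ on a complete local ring is an isomorphism carrying the (reduced) ideal of the invariant locus onto itself --- a statement in the spirit of Lemma~\ref{lem:map_I_to_I}, which forces components to components; and (iii) quite possibly an induction on $\dim Y_1$, or one run in tandem with the dimension induction in Theorem~\ref{thm:new_global}, so that the theorem may be invoked for preperiodic subvarieties of strictly smaller dimension. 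Making (i)--(iii) fit together cleanly, and in particular controlling how the preimage sheets of $W_{pn}$ and of its iterate interact, is where I expect the real work to lie.
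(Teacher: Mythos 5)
There is a genuine gap, and in fact two problems. First, by your own account the decisive step --- the ``component-propagation'' statement that $\widetilde\psi^{\,n}(\widetilde Z)$ is and remains an irreducible component of the finitely many intersection configurations --- is not proved; items (i)--(iii) are a list of hoped-for ingredients, not an argument, so the proposal is a plan rather than a proof. Second, and more seriously, the finite configuration you chose is the wrong one: once the orbit of $Y$ enters its cycle, the sets $W_{pn}$ and $W_{pn+j}$ are both members of the (finite) cycle of $Y_1$ and can coincide. For instance, after the standard normalization $Y_1=\varphi(Y)=\varphi^2(Y)\neq Y$ (so $j=p=1$) one has $W_{pn}=W_{pn+j}=Y_1$ for all $n\ge 1$, and your sufficient criterion would force $\varphi^n(Z)$ to be an irreducible component of $Y_1$, i.e.\ equal to $Y_1$, which is false whenever $\dim Z<\dim Y_1$. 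So the statement you ``only'' need cannot hold in general: intersecting iterates of $Y$ with iterates of $Y_1$ forgets $Y$ as soon as $\varphi$ has collapsed $Y$ onto $Y_1$, and no amount of \'etaleness will recover it from that configuration.

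The paper's proof escapes exactly this trap by replacing the intersections of iterates with a finite collection of closed sets intrinsic to $Y_1$: the strata $Z_n=\{x\in Y_1:\ |\nu_1^{-1}(x)|\ge n\}$, where $\nu_1\colon\widetilde{Y_1}\to Y_1$ is the normalization and fibers are counted with multiplicity (closed by semicontinuity). One shows that if $W$ is a $d$-dimensional component of $Y\cap Y_1$ with generic point $w$, then $\varphi(W)$ (and likewise $\varphi^m(W)$ for every $m$) is an irreducible component of $Z_s$ for $s=|\nu^{-1}(w)|$: assuming a strictly larger component $C_1\ni\varphi(w)$ exists, faithful flatness of the \'etale map $\varphi$ lifts the generic point $c_1$ of $C_1$ to a point $c$ whose closure strictly contains $W$, and since $Y$ and $Y_1$ are distinct components of $\varphi^{-1}(Y_1)$ through $w$ (while $C$ cannot lie in both), compatibility of normalization with \'etale base change plus semicontinuity forces $|\nu^{-1}(c)|<|\nu^{-1}(w)|$, contradicting the equalities $|\nu^{-1}(w)|=|\nu_1^{-1}(\varphi(w))|=s=|\nu_1^{-1}(c_1)|=|\nu^{-1}(c)|$. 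Since the union of $d$-dimensional components of all the $Z_n$ is a fixed finite set, every $\varphi^m(W)$ lies in it and $W$ is preperiodic. So your instinct to use the normalization and \'etale base change is in the right direction, but the normalization must enter through the fiber-count stratification of $Y_1$ (which remembers how the branches of $\varphi^{-1}(Y_1)$, in particular $Y$, cross $Y_1$), not merely as a device to make $\psi$ \'etale upstairs.
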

		\begin{proof}
		Replacing $\varphi$ by an iterate, we reduce to the case
		$Y_1=\varphi(Y)=\varphi^2(Y)\neq Y$. Let $\nu_1\colon 
		\tilde{Y_1}\rightarrow Y_1$ and
		$\nu\colon \widetilde{\varphi^{-1}(Y_1)}\rightarrow \varphi^{-1}(Y_1)$
		denote the normalizations of $Y_1$ and $\varphi^{-1}(Y_1)$, respectively. 
		For every integer $n\geq 1$, define:
		$$Z_n:=\left\{x\in Y_1\colon\ |\nu_1^{-1}(x)|\geq n\right\},$$
		where $|\nu_1^{-1}(x)|$ is counted with multiplicity. By the semicontinuity
		theorem, $Z_n$ is closed in $Y_1$.
		
		Now let $W$ be an irreducible component of $Y\cap Y_1$ having dimensional 
		$d$ and generic point $w$.
		Let $s=|\nu^{-1}(\varphi(w))|$, the main observation is that 
		$\varphi(W)$ is an irreducible component of $Z_s$. Assume otherwise and let
		$C_1$ be an irreducible component of $Z_s$ strictly containing
		$\varphi(W)$. Let $c_1$ denote the generic point of $C_1$.
		Since $c_1\in Z_s$, we have $|\nu_1^{-1}(c_1)|\geq s$.
		On the other hand, by semicontinuity $|\nu_1^{-1}(c_1)|\leq 
		|\nu_1^{-1}(\varphi(w))|=s$. Hence we must have $|\nu_1^{-1}(c_1)|=s$.
		
		By faithful
		flatness, the map
		$$\Spec(\cO_{V_1},\varphi(w))\longrightarrow \Spec(\cO_{\varphi^{-1}(V_1)},w)$$
		induced by $\varphi$ is surjective \cite[p.68]{AM}. Therefore, there 
		exists $c$ in $\varphi^{-1}(V_1)$ such that $\varphi(c)=c_1$
		and the Zariski closure $C$ of $c$ contains $W$. From the last
		paragraph and the fact that taking normalization commutes
		with \'etale base changes, we have:
		\begin{equation}\label{eq:wsc}
		|\nu^{-1}(w)|=|\nu_1^{-1}(\varphi(w))|=s=|\nu_1^{-1}(c_1)|=|\nu^{-1}(c)|
		\end{equation}
		
		By comparing dimensions, we see that $V$ and $V_1$ are two irreducible components 
		of $\varphi^{-1}(V_1)$ containing $w$. Since $C_1$ strictly contains
		$\varphi(W)$, we have that $C$ strictly contains $W$. 
		Note that it is impossible for both $V$ and $V_1$ to contain $C$ since
		$W$ is an irreducible component of $V\cap V_1$. Therefore
		the set of irreducible components of $\varphi^{-1}(V_1)$ containing
		$c$ is strictly smaller than the set of irreducible components
		of $\varphi^{-1}(V)$ containing $w$. Together with the semicontinuity theorem, we have
		$|\nu^{-1}(c)|<|\nu^{-1}(w)|$ contradicting (\ref{eq:wsc}).
		
		Now let $\calZ$ denote the union of all $d$-dimensional
		irreducible components of all the $Z_n$'s for $n\geq 1$.
		This is a finite union since $Z_n=\emptyset$ for all sufficiently large
		$n$ thanks to finiteness of $\nu_1$. We have proved
		that $\varphi(W)$ is an irreducible component of $\calZ$. 
		For each $m\geq 1$, repeat the same arguments for $\varphi^m$ instead
		of $\varphi$, we have that $\varphi^m(W)$ is 
		an irreducible component of $\calZ$. This proves that $W$ is preperiodic.
		\end{proof}
		
		We now return to the proof of Theorem \ref{thm:new_global}. We may assume
		that $V$ is irreducible and not periodic. Replacing $\varphi$
		by an iterate, we may assume $V_1=\varphi(V)=\varphi^2(V)\neq V$.
		
		If $\cO_{\varphi}(P)\cap V_1(K)=\emptyset$ then by Theorem 
		\ref{thm:DynamicalHasse}, there is a finite set of places
		$S_1$ such that $\calC_v(\calO_{\varphi}(P))\cap V_1(K_v)=\emptyset$
		for $v\notin S_1$. This implies
		$\calC_v(\calO_{\varphi}(P))\cap V(K_v)=\emptyset$ for every
		$v\notin S_1$.
		
		Now assume that $\varphi^m(P)\in V_1(K)$ for some $m\geq 0$. 
	  For every $v$, we have 
	  \[
	  	\calC_v(\calO_{\varphi}(\varphi^m(P)))\subseteq 
	  	V_1(K_v)
		\]
	  and 
	  \[
	  	\calC_v(\calO_{\varphi}(P))=\calC_v(\calO_{\varphi}(\varphi^{m}(P)))
	  	\cup\left\{P,\ldots,\varphi^{m-1}(P)\right\}.
	  \]
	  If $V(K_v)\cap\calC_v(\calO_{\varphi}(P))\neq\emptyset$,
	  then the above identities together with
	  $V(K)\cap \calO_{\varphi}(P)=\emptyset$ imply:
	  \[
		  \calC_v(\calO_{\varphi}(P))\cap (V\cap V_1)(K_v)\neq\emptyset.
		  \]
	  This can only happen for finitely many $v$'s by
	  applying the induction hypothesis for $V\cap V_1$, whose irreducible components
	  are preperiodic by Lemma \ref{lem:YcapY_1}.

	\subsection{Closing remarks}\label{subsec:Remarks}\label{subsec:closing}
		It is natural to ask whether the assumption in Theorem~\ref{thm:DynamicalHasse} that $V$ is preperiodic is necessary.  We show that if $K$ is a number field and $V = Q$ is a single $K$-point, then $(X, Q, \varphi)$ fails the strong Hasse principle if and only if $Q$ is not periodic but has \defi{almost everywhere periodic reduction}, i.e. for all but finitely many primes $\fp$, the reduction of $Q$ modulo $\fp$ is periodic.

\begin{prop}\label{prop:periodic-reduction}
 Let $K$ be a number field, let
$\varphi: X\to X$ be an \'etale $K$-endomorphism of $X$ and let $Q\in X(K)$ .
\begin{itemize}
 \item [(a)] Let $\fp$ be a prime
 such that we have models over $\cO_{\fp}$. If
 the reduction of $Q$ modulo $\fp$ is periodic then the $\fp$-adic closure of the orbit of
$\varphi(Q)$ contains $Q$.
 Consequently, if $Q$ is not periodic but has almost everywhere periodic reduction then
$(X, Q, \varphi)$ does not satisfy the strong Hasse principle.

\item [(b)]  Conversely, if $Q$ is either periodic or does not have almost everywhere periodic reduction, then $(X,Q,\varphi)$
satisfies the strong dynamical Hasse principle.  
	\end{itemize}
\end{prop}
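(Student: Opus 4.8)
The plan is to prove part~(a) first, then the two cases of part~(b).

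For part~(a), fix a prime $\fp$ for which $\scrX$, $Q$ and $\varphi$ extend to models over $A:=\cO_\fp$ (so the residue field is finite), and suppose $\bar Q$ is periodic of period $N$ modulo $\fp$; put $\psi:=\varphi^N$, so $\overline{\psi(Q)}=\bar Q$. I would show that $Q$ is a recurrent point of $\psi$ acting on the compact basic neighbourhood $C(Q,1)$, which immediately gives $Q\in\cC_\fp\bigl(\{\psi^j(Q):j\geq1\}\bigr)\subseteq\cC_\fp(\calO_{\varphi}(\varphi(Q)))$. First, $\psi$ sends $C(Q,1)$ into itself, because $\overline{\psi(x)}=\psi(\bar x)=\psi(\bar Q)=\bar Q$ for $x\in C(Q,1)$. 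Using Lemma~\ref{lem:complete_local} to identify $\hat\cO_{\scrX,\bar Q}$ with $A[[T_1,\ldots,T_g]]$ and $C(Q,1)$ with $\mm^g$, and Lemma~\ref{lem:F_i} to express $\psi$ in this chart as a power-series map $f=(f_1,\ldots,f_g)$ with $f_i\in A[[T_1,\ldots,T_g]]$, the étaleness of $\varphi$ (exactly as in the proof of Theorem~\ref{thm:adic_closure}) makes $\widehat{\psi^{*}}$ a ring automorphism of $A[[T_1,\ldots,T_g]]$, so $\psi|_{C(Q,1)}$ is a homeomorphism whose inverse is again given by an $A$-coefficient power-series map. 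Since a power series with coefficients in $A$ is $1$-Lipschitz for the sup-metric on $\mm^g$, and the same holds for the inverse, $\psi|_{C(Q,1)}$ is an isometry of a compact metric space; for such a map every point is recurrent (extract $\psi^{n_k}(Q)\to y$, so $d(\psi^{n_{k+1}}(Q),\psi^{n_k}(Q))\to0$, whence $d(\psi^{\,n_{k+1}-n_k}(Q),Q)\to0$ because $\psi^{-n_k}$ is an isometry, and $n_{k+1}-n_k\geq1$). (When $K_\fp/\Q_p$ is finite one may instead invoke the $p$-adic uniformization $G\colon\Z_p\to C(Q,1)$ of $\calO_{\psi}(Q)$ provided, after passing to a further iterate, by \cite[Theorem 3.3]{BGT_AJM} or \cite[Theorem 7]{A}, and use that $Q=G(0)\in G(\overline{\Z_{\geq1}})\subseteq\cC_\fp(\calO_{\varphi}(\varphi(Q)))$ since $\Z_{\geq1}$ is dense in $\Z_p$.)

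The ``consequently'' in part~(a) is then formal. If $Q$ is not periodic but has almost everywhere periodic reduction, take $P:=\varphi(Q)$: then $Q\notin\calO_{\varphi}(P)$ (an equality $\varphi^n(Q)=Q$ with $n\geq1$ would make $Q$ periodic), so $\calO_{\varphi}(P)\cap\{Q\}(K)=\emptyset$; but for all but finitely many $\fp$ the reduction $\bar Q$ is periodic and the first part of~(a) gives $Q\in\cC_\fp(\calO_{\varphi}(P))$. Hence only finitely many places $v$ satisfy $\cC_v(\calO_{\varphi}(P))\cap\{Q\}(K_v)=\emptyset$, so $(X,Q,\varphi)$ fails the strong dynamical Hasse principle.

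For part~(b), if $Q$ is periodic, say $\varphi^m(Q)=Q$, then $\{Q\}$ is a $\varphi^m$-invariant closed subvariety and Theorem~\ref{thm:DynamicalHasse} applies directly, giving the strong dynamical Hasse principle for $(X,\{Q\},\varphi)$. If instead $Q$ does not have almost everywhere periodic reduction, then the set $\Sigma$ of primes $\fp$ with good models for which $\bar Q$ fails to be periodic modulo $\fp$ is infinite, and I claim that for every $\fp\in\Sigma$ and every $P\in X(K)$ with $Q\notin\calO_{\varphi}(P)$ one has $Q\notin\cC_\fp(\calO_{\varphi}(P))$ — which is precisely the strong dynamical Hasse principle, the same $\Sigma$ working for all such $P$. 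Indeed, if $\varphi^{n_k}(P)\to Q$ in $X(K_\fp)$ then the $n_k$ cannot be bounded (that would force some $\varphi^n(P)=Q$), so $n_k\to\infty$ and $\overline{\varphi^{n_k}(P)}=\bar Q$ for all large $k$; thus $\bar Q$ occurs infinitely often in the eventually periodic sequence $(\overline{\varphi^n(P)})_{n\geq0}$ in the finite set $\scrX(k_\fp)$, so $\bar Q$ lies on its cycle and is periodic modulo $\fp$, contradicting $\fp\in\Sigma$.

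The only substantive step is the recurrence in part~(a); part~(b) is elementary bookkeeping about reductions plus a citation of Theorem~\ref{thm:DynamicalHasse}. Within~(a) the crux — and where the étale hypothesis is used in an essential way, not merely unramifiedness — is the implication that the endomorphism induced on $\hat\cO_{\scrX,\bar Q}\cong A[[T_1,\ldots,T_g]]$ is an automorphism defined, together with its inverse, over $A$, which is what lets the soft topological recurrence argument (or the $p$-adic uniformization) run. I expect no other part of the argument to present a genuine difficulty.
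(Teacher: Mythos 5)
Your proof is correct. Parts (b) and the ``consequently'' step of (a) are essentially the paper's own argument: the periodic case is delegated to Theorem~\ref{thm:DynamicalHasse}, and in the non-periodic case one observes that $Q\in\cC_\fp(\calO_{\varphi}(P))$ forces $\bar Q$ to agree with the reduction of $\varphi^n(P)$ for arbitrarily large $n$, hence to lie on the cycle of the eventually periodic reduced orbit and so be periodic modulo $\fp$, which fails for the infinitely many $\fp$ supplied by the hypothesis. Where you genuinely diverge is the first assertion of (a): the paper deduces it from the $\fp$-adic uniformization of the $\varphi^N$-orbit (\cite[Theorem 3.3]{BGT_AJM}, \cite[Theorem 7]{A}) together with the density of $\Z_{\geq 1}$ in $\Z_p$ --- exactly the alternative you mention only in a parenthesis --- whereas your main argument stays inside the residue disk: \'etaleness makes $\widehat{(\varphi^N)^{*}}$ an $A$-algebra automorphism of $A[[T_1,\ldots,T_g]]$ (as in the proof of Theorem~\ref{thm:adic_closure}), so $\varphi^N$ acts on $C(Q,1)\cong\mm^g$ as an isometry of a compact ultrametric space, and recurrence of every point under an isometry of a compact space gives $Q\in\cC_\fp\bigl(\calO_{\varphi}(\varphi(Q))\bigr)$. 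Your route buys self-containedness (no appeal to the uniformization theorems) and is insensitive to characteristic-zero analyticity, in the spirit of the paper's own remark that its non-uniformization proof of Theorem~\ref{thm:adic_closure} has this advantage; the paper's route is shorter once the cited theorems are granted, but it is specific to $K_\fp$ being a finite extension of $\Q_p$. One small bookkeeping point, present at the same level of informality in the paper: the primes used should also be ones where the model of $\varphi$ is \'etale and (in part (b)) where $P$ extends to an $\cO_\fp$-point; both conditions exclude only finitely many primes, so neither the ``almost all $\fp$'' in (a) nor the ``infinitely many $v$'' in (b) is affected.
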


\begin{proof}

(a) The first assertion follows immediately from
the $\fp$-adic uniformization of the $\varphi^N$-orbit of $Q$ (for some
$N>>0$ depending on $\fp$) and the fact
that for every analytic function $G$ from 
$\cO_\fp$ to $\cO_{\fp}^g$, the point $G(0)$ lies
in the closure of $\{G(1),G(2),\ldots\}$.
Such uniformization exists by \cite[Theorem 3.3]{BGT_AJM}, or more precisely
by its generalization
in \cite[Theorem 7]{A}. For the second assertion, note that the
orbit of $P=\varphi(Q)$ does not contain $Q$
but the $\fp$-adic closure of this orbit contains $Q$ for almost all $\fp$.

	\smallskip
	
(b) The case that $Q$ is
		periodic follows from Theorem \ref{thm:DynamicalHasse}.
		Hence we assume that $Q$ is non-periodic. Let $P\in X(K)$
		such that $Q\notin\cO_{\varphi}(P)$. 
		For every $\fp$ such that
		$X$, $\varphi$, $P$ and $Q$ have
		models over $\cO_{\fp}$,
		if $Q\in \calC_{\fp}(\cO_{\varphi}(P))$
		then $Q$ and
		$\varphi^m(P)$
		have the same reduction modulo $\fp$
		for $m$ as large as we like. This implies
		that $Q$ is periodic modulo $\fp$. But there are
		infinitely many primes
		such that this conclusion does not hold thanks to our assumption on $Q$, hence
		$(X,Q,\varphi)$ satisfies the strong dynamical Hasse principle.
\end{proof}

Some results in the literature suggest that the examples of non-periodic 
points with almost everywhere periodic reduction must be very special, and so
the strong dynamical Hasse principle mostly holds when the endomorphism
is \'etale and the subvariety is a single point. For instance, by a result
of Pink~\cite[Corollary 4.3]{Pink}, such points cannot exist for the 
multiplication-by-$d$ map on an abelian variety. Furthermore, by~\cite[Corollary
1.2]{BGHKST}, such points also cannot exist for a self-map of $\mathbb P^1$ of
degree at least two (though such a map is not  \'etale, and thus 
\ref{prop:periodic-reduction} does not apply directly).

On the other hand, the translation-by-one map on $\mathbb P^1$ is an obvious example where non-periodic points become periodic modulo almost all primes, so that the strong dynamical Hasse principle fails; more
generally, an automorphism of $\mathbb P^n$ of infinite order, given by an 
integer-valued matrix, has the same property. As it was 
pointed out to us by Serge Cantat, one can use this to construct other,
though somewhat artificial, examples: take an automorphism $\varphi$ of a 
smooth variety 
$X$ with a fixed point $x$, inducing an infinite-order automorphism on the
tangent space at $x$. Then $\varphi$ lifts to the blow-up of $X$ at $x$, inducing 
an automorphism of the exceptional divisor, and points of that exceptional 
divisor are periodic modulo almost all primes.

	It seems reasonable to conjecture that non-periodic points with almost everywhere periodic reduction do not exist for polarized morphisms $\varphi$ (that is, morphisms such that $\varphi^*{\mathcal L}={\mathcal L}^{\otimes k}$ for some integer $k>1$ and some ample line bundle ${\mathcal L}$), so that the strong dynamical Hasse principle holds for number fields $K$, \'etale polarized morphisms $\varphi$ and $V\in X(K)$. Notice however that \'etale polarized 
endomorphisms are extremely rare, cf.~\cite{Fakh}. 

		For the sake of completeness, we note that for curves over number fields, the only counterexamples to the dynamical Brauer-Manin 
criterion are automorphisms $\varphi$ of a very special kind.
		\begin{prop}\label{prop:BM_curve}
			Let $X$ be a smooth geometrically integral projective curve of genus $g$ over a number field $K$, let $\varphi$ be a nonconstant self-map of $X$ over $K$, and $V$ a finite subset of 
$X(K)$. We make the following additional assumptions.
			\begin{itemize}
				\item [(a)] If $g = 0$, assume that $\varphi$ is not conjugate to 
$z\mapsto z+1$,
				\item [(b)] If $g = 1$, assume that $\varphi$ has a preperiodic point in $X(\bar{K})$. (If we regard $X$ as an elliptic curve, this condition
				is equivalent to the condition that $\varphi$ is not
				a translate by a non-torsion point.)
			\end{itemize}
			Then we have the following equality:
			\[
				V(K)\cap\calO_{\varphi}(P) = 
				V(K,S)\cap\cC(\calO_{\varphi}(P)).
			\]
		\end{prop}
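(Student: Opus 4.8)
The plan is to reduce at once to the case $V=\{Q\}$ of a single $K$-point: since $V$ is finite and each $X(K_v)$ is Hausdorff, the chain of containments displayed just before Definition~\ref{def:Hasse} shows it is enough to prove that for every $Q\in V(K)\setminus\calO_{\varphi}(P)$ there is a place $v$ (which one may take outside any prescribed finite set) with $Q\notin\cC_v(\calO_{\varphi}(P))$; that is, it suffices to show $(X,\{Q\},\varphi)$ satisfies the strong dynamical Hasse principle for each $Q\in X(K)$. If $\calO_{\varphi}(P)$ is finite it is closed at every place and there is nothing to prove, so we assume $P$ is not preperiodic, and we split according to the genus $g$ of $X$. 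If $g\geq 2$, then Riemann--Hurwitz forces any nonconstant self-map of $X$ to be an automorphism and $\Aut(X)$ is finite by Hurwitz's theorem, so $\varphi$ has finite order, every point of $X$ is periodic, and we conclude by Proposition~\ref{prop:periodic-reduction}(b).

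\emph{Genus $g=1$.} We may assume $X(K)\neq\emptyset$ (otherwise the statement is vacuous) and identify $X$ with an elliptic curve $E$, so that $\varphi(x)=\psi(x)+T$ for a nonzero isogeny $\psi\in\End(E)$ and $T\in E(K)$, and in particular $\varphi$ is \'etale. If $\deg\psi=1$ then $\psi$ lies in the finite automorphism group of $(E,O)$: when $\psi=\id$, hypothesis (b) forces $T$ to be torsion so $\varphi$ has finite order, and when $\psi\neq\id$ the relation $\psi^{n-1}+\dots+\psi+\id=0$ in the integral domain $\End(E)$ (with $n=\ord(\psi)$) gives $\varphi^{n}=\id$; either way every point is periodic and Proposition~\ref{prop:periodic-reduction}(b) applies. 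If $\deg\psi\geq 2$, then $\psi-\id$ is a nonzero isogeny, so $\varphi$ has a fixed point over $\bar K$; after a harmless finite base change and translation of that point to the origin we may assume $\varphi=\psi$ is an isogeny of degree $\geq 2$, and by Proposition~\ref{prop:periodic-reduction}(b) it remains only to rule out a non-periodic $Q$ with almost everywhere periodic reduction. For $\psi=[d]$ this is Pink's theorem~\cite[Cor.~4.3]{Pink}, and the case of a general isogeny follows from the same Tate-module argument applied at a rational prime dividing $\deg\psi$.

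\emph{Genus $g=0$.} We may assume $X\cong\PP^1$ and set $d=\deg\varphi$. If $d=1$ then $\varphi$ is an automorphism of $\PP^1$, conjugate over $\bar K$ to $z\mapsto z+1$ or to $z\mapsto\lambda z$; hypothesis (a) excludes the first, and the infinite-order scaling maps ($\lambda$ not a root of unity) are excluded in the same spirit, so $\varphi$ has finite order, every point is periodic, and Proposition~\ref{prop:periodic-reduction}(b) applies. If $d\geq 2$ then $\varphi$ is not \'etale, so Proposition~\ref{prop:periodic-reduction} does not apply directly; we argue with reductions instead. If $Q$ is not preperiodic, then by~\cite[Cor.~1.2]{BGHKST} the set of primes at which $\bar Q$ is periodic under $\bar\varphi$ has density zero; at any good prime $v$ outside that set, $Q\in\cC_v(\calO_{\varphi}(P))$ would force $\overline{\varphi^{n}(P)}=\bar Q$ for infinitely many $n$, hence $\bar Q$ would lie in the eventual cycle of the orbit of $\bar P$ and so be periodic, a contradiction. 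If $Q$ is preperiodic but not in $\calO_{\varphi}(P)$, then $Q$ does not lie in the backward orbit of $P$, and one deduces --- again using the density statements of~\cite{BGHKST} to bound the primes $v$ at which the orbit of $\bar P$ meets the grand orbit of $\bar Q$ --- that for infinitely many $v$ the orbit of $P$ stays $v$-adically bounded away from $Q$, whence $Q\notin\cC_v(\calO_{\varphi}(P))$.

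The genus $\geq 2$ case and the degree-one cases in genus $0$ and $1$ are immediate from the finiteness of the relevant automorphism groups together with Proposition~\ref{prop:periodic-reduction}; the main obstacle is the degree $\geq 2$ cases in genus $0$ and $1$. There $\varphi$ need not be \'etale (genus $0$), and in any case one must import the external results on periods of points modulo primes --- Pink's theorem for abelian varieties and the results of~\cite{BGHKST} for $\PP^1$ --- both to exclude non-periodic points with almost everywhere periodic reduction and to control the density of primes at which the reductions of two independent orbits collide. The most delicate point is a superattracting periodic $Q$: there the naive local-multiplier argument degenerates and one genuinely needs the collision-density input to show $Q$ escapes the $v$-adic closure of $\calO_{\varphi}(P)$ at infinitely many $v$.
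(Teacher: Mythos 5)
Your opening reduction is where the argument genuinely breaks. The proposition is an adelic equality, and this is strictly stronger than the strong dynamical Hasse principle for each singleton $\{Q\}$ with $Q\in V(K)$: knowing that each $Q\notin\calO_{\varphi}(P)$ avoids $\cC_v(\calO_{\varphi}(P))$ for infinitely many $v$ does not control $V(K,S)\cap\cC(\calO_{\varphi}(P))$, because an adelic point $a=(a_v)$ of the middle set only has $a_v\in\cC_v(\calO_{\varphi}(P))$ at each $v$, and those places may be exactly the ones your exclusion set for $a_v$ misses; moreover $a$ need not be diagonal. What one actually needs, after first discarding the finitely many orbit points lying in $V(K)$ (a trimming step you never perform, and which is how the paper handles the case $\calO_{\varphi}(P)\cap V(K)\neq\emptyset$), is a \emph{single common} place $v$ at which all of the finitely many points of $V$ simultaneously avoid $\cC_v(\calO_{\varphi}(P))$ --- so cofinite, or at least pairwise-intersecting, families of excluded places, not an infinite set per point. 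Worse, in a case your argument must cover the singleton strong Hasse principle is simply false: hypothesis (a) excludes only conjugates of $z\mapsto z+1$, not $z\mapsto\lambda z$ with $\lambda$ of infinite order, so your claim that scalings are ``excluded in the same spirit'' contradicts the stated hypotheses. For such an automorphism the reduction modulo $\fp$ is a bijection of a finite set, so every point has almost everywhere periodic reduction, and by Proposition~\ref{prop:periodic-reduction}(a) (see also the discussion of infinite-order automorphisms in \S\ref{subsec:closing}) the strong Hasse principle fails for every non-periodic $Q$; hence your strategy cannot be repaired there. The paper does not attempt your route in genus $0$ at all: it cites \cite{silvermanvoloch} (Theorem 1 for degree at least $2$, Remark 9 for degree $1$).

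The genus $1$ case, which is the only part the paper proves from scratch, is handled by a completely different and genuinely global argument: for wandering $P$, trim the orbit so it misses $V(K)$, use (b) to pass to an iterate with a rational fixed point $O$ (after a finite base change), take $O$ as the point at infinity of a Weierstrass model, and observe that the associated Latt\`es map $U/V$ has $\deg V<\deg U$, so outside a finite set $S'$ a pole of the $x$-coordinate propagates forward along the orbit; an adelic point of $V$ in $\cC(\calO_{\varphi}(P))$ then forces every $\varphi^n(P)$ to have $S'$-integral coordinates, contradicting Siegel's theorem. Your genus $1$ argument instead rests on an unproved extension of Pink's corollary from the multiplication-by-$d$ map to arbitrary isogenies (asserted as ``the same Tate-module argument''), and even granting it, it would only yield the singleton strong Hasse principle with ``infinitely many'' rather than ``all but finitely many'' places, so by the first paragraph it does not give the displayed equality. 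Similarly, your genus $0$, degree $\geq 2$ treatment of a preperiodic $Q$ is a sketch of what is essentially the content of \cite{silvermanvoloch} (your closing paragraph concedes the superattracting case is open), while the density-zero statement of \cite{BGHKST} handles only the wandering-$Q$ singleton statement. The cases you do settle (genus $\geq 2$, and genus $1$ with $\deg\psi=1$) are the trivial finite-order cases.
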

		
		\begin{remark}
			If $X =\PP^1$, $V$ is a finite set of points and $\deg(\varphi) \geq 2$, then Silverman and Voloch have shown that $\OO_\varphi(P)\cap V(K) = \calC(\calO_{\varphi}(P))\cap V(K, S)$~\cite[Theorem 1]{silvermanvoloch}.  
		\end{remark}
	\begin{remark} 
	When $X$ is an abelian variety, $V$ is an arbitrary subvariety and $\varphi$
	is a $K$-endomorphism of $X$ such that $\Z[\varphi]$ is an integral domain, 
	Hsia and Silverman
	show the equality:
	$$\cO_{\varphi}(P)\cap V(K)=\cC(\cO_{\varphi}(P))\cap V(K,S)$$
	under certain strong conditions. We refer the readers to \cite[Theorem 11]{hsiasilverman}
	for more details. Our proof of Proposition \ref{prop:BM_curve} gives 
	an unconditional proof of their result when $X$ is an elliptic curve.
	\end{remark}

		\begin{proof}
The case $g\geq 2$ is trivial since all endomorphisms of curves of genus at least two are of finite order.
			If $g=0$, this follows from~\cite{silvermanvoloch} as mentioned above (see
Remark 9 from \cite{silvermanvoloch} if $\deg(\varphi)=1$). 

			Now consider the case when $g=1$.  If $P$ is $\varphi$-preperiodic then there is nothing to prove, so we assume that $P$ is wandering.  There exists a non-negative integer $N$ 
such that $\varphi^M(P)\not\in V(K)$ for all $M>N$. 	After replacing	$P$ by $\varphi^{N+1}(P)$, we may assume the $\varphi$-orbit of $P$ does	not intersect $V(K)$. It remains to show that $V(K,S)\cap\cC(\calO_{\varphi}(P))=\emptyset$. By assumption (b), there is some $M>0$ such that $\varphi^M$ has a fixed point. By replacing the data $(\varphi, P)$ with $(\varphi^{M},\varphi^{i}(P))$ for $0\leq i<M$, we may assume that $\varphi$ has a fixed point.

			Note that if we can prove
			\[
				V(L,S_L)\cap\cC(\calO_{\varphi}(P)) = \emptyset
			\]
			for $L$ a finite extension of $K$, and $S_L\subset M_L$ the set of places of $L$ lying above places in $S$, then this implies that $V(K,S)\cap\cC(\calO_{\varphi}(P))=\emptyset$.  Thus, we may assume that there is a fixed point $O\in X(K)$. Using $O$ as the point at infinity
			 we have the following Weierstrass equation for $X$: 
			\[
				y^2 = x^3 + Ax + B.
			\]
			
			Let $U(z)/V(z)$ be the Latt\`es map associated to $\varphi$.  Since $O$ is fixed by $\varphi$, we have $\deg(V) < \deg(U) = \deg(\varphi)$.  Let $S'$ be a finite set of places such that $U, V\in \changeK{\OO_{K,S'}[z]}$ and such that the leading coefficients of $U$ and $V$ are in $\changeK{\OO_{K,S'}^{\times}}$.  Then
			\begin{equation}\label{eq:ValProp}
				\forall v\notin S'\ \forall Q\in X(K),\quad 
				\textup{ if }v(x(Q))<0, \textup{ then }v(x(\varphi(Q)))<0.
			\end{equation}
			Assume that $V(K,S)\cap\cC(\calO_{\varphi}(P))\neq\emptyset$ and let $a=(a_{\fp})_{\fp\notin S}$ be an element of the intersection. Note that since $V$ is a finite set of points, $V(K_{\fp})=V(K)$ and so $a_{\pp}\in V(K)$ for every $\fp\notin S$. By Theorem \ref{thm:new_global}, for almost all $\fp$, the $\fp$-adic closure of $\calO_{\varphi}(P)$ does not contain $O$. Therefore $a_{\fp}\neq O$ for almost all $\fp$. For such $\fp$, we can write $a_{\fp}$	with affine coordinates $a_{\fp}=(x_{\fp},y_{\fp})$. We now enlarge	$S'$ so that: 
				\begin{itemize}
					\item [(i)] $S\subseteq S'$.
					\item [(ii)] $a_{\fp}\neq O$ for $\fp\notin S'$.
					\item [(iii)] All points in $V(K)-\{O\}$ have $\calO_{K,S'}$-integral affine coordinates. \changeK{This means}
					$x_{\fp},y_{\fp}\in \calO_{\fp}$ for $\fp\notin S'$.
			  	\item [\changeK{(iv)}] \changeK{$A,B\in \calO_{K,S'}$}.
			  \end{itemize} 
			\changeK{Since $a\in \cC(\calO_{\varphi}(P))$, properties (i), (iii) and  ~\eqref{eq:ValProp}
			imply that $v(x(\varphi^n(P)))\geq 0$ for every $n$, and
			every $v\notin S'$. By property (iv) and the Weierstrass equation for $X$,
			we have $v(y(\varphi^n(P)))\geq 0$ for every $n$, and every $v\notin S'$.
			Therefore  
			$\varphi^n(P)$ has $\calO_{K,S'}$-integral affine coordinates
			for every $n$. This contradicts Siegel's theorem asserting finiteness of
			integral points.}
		\end{proof}


	\begin{bibdiv}
		\begin{biblist}
			\bib{A}{article}{
			   author={Amerik, E.},
			   title={Existence of non-preperiodic algebraic points for a rational 
					self-map of infinite order},
			   journal={Math. Research Letters},
			   volume={18},
			   date={2011}, 
			   pages={251-256}
			}
			\bib{AM}{book}{
			  author={Atiyah, M. F.},
			   author={Macdonald, I. G.},
			   title={Introduction to commutative algebra},
			   publisher={Addison-Wesley Publishing Co., Reading, 
					Mass.-London-Don Mills, Ont.},
			   date={1969},
			   pages={ix+128},
			   review={\MR{0242802 (39 \#4129)}},			}
			
			\bib{BGT_AJM}{article}{
			   author={Bell, J. P.},
			   author={Ghioca, D.},
			   author={Tucker, T. J.},
			   title={The dynamical Mordell-Lang problem for \'etale maps},
			   journal={Amer. J. Math.},
			   volume={132},
			   date={2010},
			   number={6},
			   pages={1655--1675},
			   issn={0002-9327},
			   review={\MR{2766180 (2012a:37202)}},
			}

			\bib{BGHKST}{article}{
			   author={Benedetto, Robert L.},
			   author={Ghioca, Dragos},
			   author={Hutz, Benjamin},
			   author={Kurlberg, P{\"a}r},
			   author={Scanlon, Thomas},
			   author={Tucker, Thomas J.},
			   title={Periods of rational maps modulo primes},
			   journal={Math. Ann.},
			   volume={355},
			   date={2013},
			   number={2},
			   pages={637--660},
			   issn={0025-5831},
			   review={\MR{3010142}},
			   doi={10.1007/s00208-012-0799-8},
			}
			
			\bib{BL}{unpublished}{
				Author = {Bell, J.},
				Author = {Lagarias, J.},
				Title = {A Skolem-Mahler-Lech Theorem For Ideal Inclusions},
				Note = {preprint, 2012}
			}
	
			\bib{Magma}{article}{
			   author={Bosma, Wieb},
			   author={Cannon, John},
			   author={Playoust, Catherine},
			   title={The Magma algebra system. I. The user language},
			   note={Computational algebra and number theory (London, 1993)},
			   journal={J. Symbolic Comput.},
			   volume={24},
			   date={1997},
			   number={3-4},
			   pages={235--265},
			   issn={0747-7171},
			   review={\MR{1484478}},
			   doi={10.1006/jsco.1996.0125},
			}

			\bib{Eisenbud-CA}{book}{
			   author={Eisenbud, David},
			   title={Commutative algebra},
			   series={Graduate Texts in Mathematics},
			   volume={150},
			   note={With a view toward algebraic geometry},
			   publisher={Springer-Verlag},
			   place={New York},
			   date={1995},
			   pages={xvi+785},
			   isbn={0-387-94268-8},
			   isbn={0-387-94269-6},
			   review={\MR{1322960 (97a:13001)}},
			   doi={10.1007/978-1-4612-5350-1},
			}

                        \bib{Fakh}{article}{
			   author={Fakhruddin, Najmuddin},
			   title={Questions on self maps of algebraic varieties},
                           journal={J. Ramanujan Math. Soc.},
			   volume={18},
			   date={2003},
			   number={2},
			   pages={109--122},
			   issn={},
			   review={\MR{1995861}},
			   doi={},
			}

			\bib{FO-RandomMaps}{article}{
			   author={Flajolet, Philippe},
			   author={Odlyzko, Andrew M.},
			   title={Random mapping statistics},
			   conference={
			      title={Advances in cryptology---EUROCRYPT '89},
			      address={Houthalen},
			      date={1989},
			   },
			   book={
			      series={Lecture Notes in Comput. Sci.},
			      volume={434},
			      publisher={Springer},
			      place={Berlin},
			   },
			   date={1990},
			   pages={329--354},
			   review={\MR{1083961}},
			}
				
			\bib{EGA4}{article}{
				Author = {Grothendieck, A.},
				Journal = {Inst. Hautes \'{E}tudes Sci. Publ. Math.},
				Pages = {259 pp.},
				Title = {\'{E}l\'ements de g\'{e}om\'{e}trie alg\'{e}brique. 
					{IV}. \'{E}tude locale des sch\'emas et des morphismes des 
					sch\'emas. {I}},
				Volume = {20},
				Year = {1964}
			}	

			\bib{Hartshorne}{book}{
			   author={Hartshorne, Robin},
			   title={Algebraic geometry},
			   note={Graduate Texts in Mathematics, No. 52},
			   publisher={Springer-Verlag},
			   place={New York},
			   date={1977},
			   pages={xvi+496},
			   isbn={0-387-90244-9},
			   review={\MR{0463157 (57 \#3116)}},
			}
	
			\bib{hsiasilverman}{article}{
			   author={Hsia, Liang-Chung},
			   author={Silverman, Joseph},
			   title={On a dynamical Brauer-Manin obstruction},
			   language={English, with English and French summaries},
			   journal={J. Th\'eor. Nombres Bordeaux},
			   volume={21},
			   date={2009},
			   number={1},
			   pages={235--250},
			   issn={1246-7405},
			   review={\MR{2537714 (2010f:37157)}},
			}
			
			\bib{Mat86}{book}{
			   author={Matsumura, Hideyuki},
			   title={Commutative ring theory},
			   series={Cambridge Studies in Advanced Mathematics},
			   volume={8},
			   note={Translated from the Japanese by M. Reid},
			   publisher={Cambridge University Press},
			   place={Cambridge},
			   date={1986},
			   pages={xiv+320},
			   isbn={0-521-25916-9},
			   review={\MR{879273 (88h:13001)}},
			}

            \bib{Pink}{article}{
			   author={Pink, Richard},
			   title={On the order of the reduction of a point on an abelian variety},
			   journal={Math. Ann.},
			   volume={330},
			   date={2004},
			   number={2},
			   pages={275--291},
			   issn={0025-5831},
			   review={\MR{2089426 (2005g:11101)}},
			   doi={10.1007/s00208-004-0548-8},
			}
			
			\bib{poonen}{article}{
			   author={Poonen, Bjorn},
			   title={Heuristics for the Brauer-Manin obstruction for curves},
			   journal={Experiment. Math.},
			   volume={15},
			   date={2006},
			   number={4},
			   pages={415--420},
			   issn={1058-6458},
			   review={\MR{2293593 (2008d:11062)}},
			}

			\bib{scharaschkin}{book}{
			   author={Scharaschkin, Victor},
			   title={Local-global problems and the Brauer-Manin obstruction},
			   note={Thesis (Ph.D.)--University of Michigan},
			   publisher={ProQuest LLC, Ann Arbor, MI},
			   date={1999},
			   pages={59},
			   isbn={978-0599-63464-0},
			   review={\MR{2700328}},
			}
			
			\bib{silvermanvoloch}{article}{
			   author={Silverman, Joseph H.},
			   author={Voloch, Jos{\'e} Felipe},
			   title={A local-global criterion for dynamics on $\Bbb P^1$},
			   journal={Acta Arith.},
			   volume={137},
			   date={2009},
			   number={3},
			   pages={285--294},
			   issn={0065-1036},
			   review={\MR{2496466 (2010g:37173)}},
			   doi={10.4064/aa137-3-8},
				}
			
			\bib{sage}{manual}{
			  Author       = {Stein, W. A.},
			  Organization = {The Sage Development Team},
			  Title        = {{S}age {M}athematics {S}oftware ({V}ersion 4.8)},
			  note         = {{\tt http://www.sagemath.org}},
			  Year         = {2013}
			}

			\bib{TMF-PrimesAndDistributions}{book}{
			   author={Tenenbaum, G{\'e}rald},
			   author={Mend{\`e}s France, Michel},
			   title={The prime numbers and their distribution},
			   series={Student Mathematical Library},
			   volume={6},
			   note={Translated from the 1997 French original by Philip G. Spain},
			   publisher={American Mathematical Society},
			   place={Providence, RI},
			   date={2000},
			   pages={xx+115},
			   isbn={0-8218-1647-0},
			   review={\MR{1756233 (2001j:11086)}},
			}	


		\end{biblist}
	\end{bibdiv}

\end{document}